\def\b#1{{\bf#1}}
\def\x{{\bf X}} \def\l2{\mathbf{L}^2(\Omega)}
\def\c0{\mathbf{H}_0(\mathrm{curl},\Omega) }
\def\d0{\mathbf{H}_0(\mathrm{div0},\Omega;\epsilon) }
\def\rt{\rightarrow} \def\wt{\widetilde} \def\wh{\widehat}
\def\1{C_1} \def\2{C_2} \def\3{C_3} \def\4{C_4} \def\5{C_5} \def\6{C_6} \def\0{C_0}
\title{Multigrid methods based on shifted inverse iteration for the Maxwell eigenvalue problem\thanks{Project supported
 by the National Natural Science Foundation of China (Grant No. 11161012). }}
\author{JIAYU HAN\thanks{\tt School of Mathematical Sciences, Guizhou
Normal University, GuiYang, 550001, China (hanjiayu126@126.com). }}
\begin{document}
 \maketitle
\begin{abstract}In this paper  two types of multgrid methods, i.e., the Rayleigh quotient iteration
and the inverse iteration with fixed shift,  are developed
for solving the Maxwell eigenvalue problem with  discontinuous relative magnetic permeability
and electric permittivity. With the aid of the mixed form of source problem associated with the eigenvalue problem, we prove
 the uniform convergence of
 the discrete solution operator  to the solution operator in $\l2$ using discrete compactness of edge element space.
Then we prove the asymptotically optimal error estimates for both multigrid methods. Numerical experiments
confirm our theoretical analysis.
\end{abstract}
\begin{keywords}
Maxwell
eigenvalue problem, multigrid method, edge element, error analysis
\end{keywords}
\begin{AMS}
65N25, 65N30
\end{AMS}
\pagestyle{myheadings} \thispagestyle{plain} \markboth{JIAYU HAN }{MULTIGRID METHODS FOR MAXWELL EIGENVALUES}
\section{Introduction}
The Maxwell eigenvalue problem is of basic importance in designing resonant structures for advanced waveguide.
Up to now, the communities from  numerical mathematics and  computational electromagnetism have developed plenty of
numerical methods  for solving this problem (see, e.g., \cite{arbenz,ainsworth,boffi1,boffi3,buffa1,buffa2,brenner,ciarlet1,chen2,chatterjee,milan,jiang,kikuchi,reddy,russo,zhou}).

The difficulty of numerically solving the  eigenvalue problem lies in imposing the divergence-free constraint.
For this purpose, the nodal finite element methods utilize the filter, parameterized and mixed approaches to
find the true eigenvalues \cite{buffa1, ciarlet1}.
The researchers in  electromagnetic field usually   adopt edge finite elements due to   the property of
tangential continuity of electric field \cite{boffi1,chatterjee,hiptmair1,jiang}.
 Using edge finite element methods, when one only considers to compute the nonzero eigenvalue, the divergence-free
 constraint can be dropped from the weak form and satisfied naturally (see \cite{zhou}).
 But this will introduce spurious zero eigenvalues.
Since the eigenspace  corresponding to zero  is infinite-dimensional,
usually, the finer the mesh is, the more the spurious eigenvalues there are.
However, using this form there is no difficulty in computing   eigenvalues on a very coarse mesh.
So the work of \cite{zhou} subtly applies this weak form to two grid method for the Maxwell eigenvalue problem.
That is, they first solve a Maxwell eigenvalue problem on a coarse mesh and then solve a linear Maxwell
equation on a fine mesh.
  Another approach is the mixed form of saddle point type in which a Lagrange multiplier is introduced to impose
  the divergence-free constraint
  (see \cite{kikuchi,arbenz,hiptmair1}).
  A remarkable feature of the mixed form is
   its equivalence to the weak form in \cite{zhou} for nonzero eigenvalues.
   The mixed form is well known as having a good property of no spurious eigenvalues being introduced. However,
   it is not an easy task to   solve it on a fine mesh (see \cite{arbenz,arbenz1}).

The multigrid methods for solving eigenvalue problems originated from the idea of two grid method proposed by \cite{xu2}.
Afterwards, this work was further developed by \cite{hu,yang2,yang3,zhou}. Among them the more recent work   \cite{yang2}
makes a relatively systematical research on  multigrid methods based on shifted inverse iteration especially on
its  adaptive fashion.

Inspired by the above works, this paper is devoted to developing  multigrid methods for solving the Maxwell eigenvalue problem. We first use the mixed form to solve the eigenvalue problem on a coarser mesh and then
solve a series of Maxwell equations on finer and finer meshes without using the mixed form. Roughly speaking,
we develop the two grid method in \cite{zhou} into multigrid method where only  nonzero eigenvalues are focused on.
We prefer to use the mixed form instead of the one in \cite{zhou} on a coarse mesh to capture the information of the true
 eigenvalues.  One reason is that  the mixed form can include the physical zero eigenvalues and rule out the spurious zero ones simultaneously.
 Using it, the physical zero eigenvalues can be captured on a very coarse mesh,   
which is necessary when  the resonant cavity has   disconnected boundaries(see, e.g., \cite{jiang}).
Another reason lies in that the mixed discretization of saddle point type is  not difficult to solve on a coarse mesh.

In this paper, we study two types of multigrid methods based on shifted inverse iteration:
Rayleigh quotient iteration and inverse iteration with fixed shift. The former  is a well-known method
 for solving matrix eigenvalues but the corresponding coefficient matrix is nearly singular and
 difficult to solve to some extend. To overcome this difficulty the latter first performs the
 Rayleigh quotient iteration at previous few steps and then fixes the shift at the following steps as the estimated eigenvalue
 obtained by the former.
 Referring to the error analysis framework in \cite{yang4} and using compactness property of edge element space,
  we first prove the  uniform convergence of
 the discrete solution operator  to the solution operator in $\l2$ and then the error estimates of eigenvalues and eigenfunctions
 for the mixed discretization;
 then we adopt  the analysis tool in \cite{yang2} that is different from the one in \cite{zhou} and  prove the  asymptotically optimal error estimates for both
 multigrid methods.
     In addition, this paper is concerned about the  theoretical analysis   for the case of the
     discontinuous electric permittivity  $\mu$ and magnetic permeability $\epsilon$ in complex matrix form, which has important applications
for the resonant cavity being filled with different dielectric materials invariably.
It is noticed that our multigrid methods and  theoretical results are not only  valid for the lowest order edge elements
but also for high order ones.
More importantly, based on the work of this paper, once given an a posteriori error indicator
of eigenpair one can further develop the adaptive algorithms
of shifted inverse iteration type for the problem.
In the last
section of this paper, we present several numerical examples to validate the efficiency of our methods in different
cases.

Throughout this paper,   we use
the symbol $a \lesssim b$   to mean that $a \le Cb$, where $C$ denotes a positive constant independent of   mesh parameters and iterative times
and may not be the same   in different places.

\section{Preliminaries}
Consider the Maxwell eigenvalue problem in electric field

\begin{eqnarray}\label{s1}
 \mathrm{curl}(\mu^{-1}\mathrm{curl}\mathbf{u})=\omega^2\epsilon\mathbf{u}~~~in~\Omega,\\\label{s2}
 \mathrm{ div}(\epsilon\mathbf{u})=0 ~~~in~\Omega,\\\label{s3}
 \mathbf{u}_t=0 ~~~in~\partial \Omega,
\end{eqnarray}
where $\Omega$ is a bounded Lipschitz polyhedron domain in $\mathbb{R}^d(d=2,3) $, $\mathbf{u}_t$ is the tangential
trace of $\mathbf{u}$.
The coefficient $\mu$ is the electric permittivity, and $\epsilon$ is the magnetic permeability and  piecewise smooth.
    In this paper, $\lambda=\omega^2$ with $\omega$ being the angular frequency, is defined as the eigenvalue of
     this problem. We assume that $\mu,\epsilon$ are two positive definite
      Hermite matrices such that $\mu^{-1},\epsilon\in (L^{\infty}(\Omega))^{d\times d}$
   and there exist  two
positive numbers $ \gamma,\beta$ satifying
 \begin{eqnarray}\label{2.4}
 \overline{\xi}\cdot \mu^{-1}\xi\ge\gamma \overline{\xi}\cdot \xi,~ \overline{\xi}\cdot \epsilon\xi\ge\beta \overline{\xi}\cdot \xi,~\forall 0\neq\xi\in \mathbb{C}^d.
 \end{eqnarray}

\subsection{Some weak forms}
Let
\begin{eqnarray*}
 &&\mathbf{H}_0(\mathrm{curl},\Omega)=\{ \mathbf{u}\in \mathbf L^2(\Omega): \mathrm{curl}(\mathbf{u})\in\mathbf L^2(\Omega),\mathbf{u}_t|_{\partial\Omega}=0\},
\end{eqnarray*}
equipped with the norm $\|\mathbf{u}\|_{\mathrm{curl}}:=\|\mathrm{curl}\mathbf{u}\|_{0}+\|\mathbf{u}\|_{0}$.
Throughout this paper,
$\|\cdot\|_{0}$ and $\|\cdot\|_{0,\epsilon}$ denote the norms in $\l2$ induced by the inner products $(\cdot,\cdot)$
and $(\epsilon\cdot,\cdot)$    respectively.
Define the  divergence-free space:
\begin{eqnarray*}
 &&\mathbf X:=\{\mathbf{u}\in \mathbf H_0(\mathrm{curl},\Omega):~ \mathrm{div}(\epsilon\mathbf{u})=0\}. 
\end{eqnarray*}
The standard weak form of the Maxwell eigenvalue problem (\ref{s1})-(\ref{s3}) is as follows:
Find $(\lambda,\mathbf{u})\in\mathbb{R}\times\mathbf X$ and $\mathbf{u}\neq0$ such that
\begin{eqnarray}\label{2.5}
a(\mathbf{u},\mathbf{v})=\lambda  (\epsilon\mathbf{u},\mathbf{v}), ~~\forall  \mathbf{v}\in\mathbf X,
\end{eqnarray}
where $a(\mathbf{u},\mathbf{v})=(\mu^{-1}\mathrm{curl}\mathbf{u},\mathrm{curl}\mathbf{v})$.
Denote $\|\b v\|_a:=\sqrt{a(\b v,\b v)},\forall \b v \in \c0$.

As the divergence-free space $\x$ in (\ref{2.5}) is difficult to  discretize, alternatively, we would like to solve
the eigenvalue problem (\ref{s1})-(\ref{s3}) in the larger space $\mathbf H_0(\mathrm{curl};\Omega)$, that is: 
 Find $(\lambda,\mathbf{u})\in\mathbb{R}\times \mathbf H_0(\mathrm{curl};\Omega)$ and $\mathbf{u}\neq0$ such that
\begin{eqnarray}\label{s5}
a(\mathbf{u},\mathbf{v})=\lambda  (\epsilon\mathbf{u},\mathbf{v}), ~~\forall  \mathbf{v}\in \mathbf H_0(\mathrm{curl};\Omega).
\end{eqnarray}
Note that when $\lambda\neq0$ (\ref{s5}) and (\ref{2.5}) are equivalent, since (\ref{s5}) implies the divergence-free
condition holds for $\lambda\neq0$ (e.g., see \cite{zhou}).

According to (\ref{2.4}), we have
  \begin{eqnarray}\label{2.7}
  \sqrt\gamma\|\mathrm{curl}\b u\|_{0}\le\|\b u\|_{a}
  \end{eqnarray}
In order to study the  eigenvalue problem in $\c0$ we need the auxiliary bilinear form
$$A(\b w,\b v)=a(\b w,\b v)+ \frac{\gamma}{\beta}(\epsilon\b w,\b v),$$
which defines an equivalent norm $\|\cdot\|_A=\sqrt{A(\cdot,\cdot)}$ in $\c0$.


 By Lax-Milgram Theorem we can define the solution operator $T:\l2\rightarrow \mathbf{X}$ as
\begin{eqnarray}\label{s2.7}
A(T\mathbf{f},\mathbf{v})=   (\epsilon\mathbf{f},\mathbf{v}),~\forall \mathbf{v}\in \mathbf{X}.
\end{eqnarray}

Then  the eigenvalue problem (\ref{2.5})   has the operator form
$$T\b u=\wt\lambda^{-1}\b u~with~\wt\lambda=\lambda+\frac{\gamma}{\beta}.$$


The following mixed weak form of saddle point type   can be found in \cite{kikuchi,arbenz,arbenz1,boffi3,jiang}:
Find $(\lambda,\mathbf{u},\sigma)\in \mathbb{R}\times   \c0\times H^1_0(\Omega)$ with
$\mathbf{u}\ne0$  such that
\begin{eqnarray}
a(\mathbf{u},\mathbf{v})+\overline{b(\mathbf{v},\sigma)}&=&\lambda(\epsilon\mathbf{u},\mathbf{v}),~~\forall \mathbf{v}\in \c0,\label{3.1ss}\\
b(\mathbf{u},p)&=&0,~~\forall p\in H^1_0(\Omega),\label{3.2ss}
\end{eqnarray}
where $b(\mathbf{v}, p):=(\epsilon\b v,\nabla p)$ for any $\b v\in\c0,~p\in H^1_0(\Omega)$.

We introduce the corresponding mixed equation: Find $\widetilde{T}\b f\in \c0$ and $ S \b f\in  H^1_0(\Omega)$ for $\b f \in \l2$ such that
\begin{eqnarray}\label{s2.3}
A(\widetilde{T}\b f,\mathbf{v})+\overline{b(\mathbf{v}, S \b f)}&=& (\epsilon\mathbf{f},\mathbf{v}),~~\forall \mathbf{v}\in \c0,\\\label{s2.4}
b(\widetilde{T}\mathbf{f},p)&=&0,~~\forall p\in  H^1_0(\Omega);
\end{eqnarray}
the following LBB condition
can be verified by taking $\b w=\nabla \b v$
\begin{eqnarray*}
\sup_{\b w\in\c0}\frac{|b(\b w,\b v)|}{\|\b w\|_{\mathrm{curl}}}\ge \beta |\b v|_{1},~\forall \b v\in H^1_0(\Omega).
\end{eqnarray*}
This yields the existence and uniqueness of linear bounded operators $\wt T$ and $S$ (see \cite{brezzi}).
Due to Helmholtz decomposition $\mathbf{H}_0(\mathrm{curl};\Omega)=\nabla {H}_0^1(\Omega)\bigoplus \mathbf X$, it is easy to
see $R(T)\subset\x$ and $S \b f=0$, $T\b f=\widetilde{T}\b f$  for any $\b f\in\x$. Hence  $T$ and $\widetilde{T}$
share the same eigenpairs. More importantly,  the operator $\wt T: \l2\rt\l2$ is self-adjoint. In fact,
$\forall \mathbf{v},\mathbf{w}\in \l2$,
\begin{eqnarray}\label{s2.6}
(\epsilon \b w,\wt T\b v)=A(\wt T\mathbf{w},\wt T\mathbf{v})
=\overline{A(\wt T\mathbf{v},\wt T\mathbf{w})}
=\overline{(\epsilon\mathbf{v},\wt T\mathbf{w})}= (\epsilon\wt T\mathbf{w},\mathbf{v}).
\end{eqnarray}
Note that $\wt T$ is compact as a operator from $\l2$ to $\l2$ and from
$\mathbf{X}$ to $ \mathbf{X}$ since $\mathbf{X}\hookrightarrow\l2$ compactly (see Corollary 4.3 in \cite{hiptmair1}).


\subsection{Edge element discretizations and error estimates}
 We will consider the edge element approximations based on the  weak forms (\ref{2.5}),
 (\ref{s5}) and (\ref{3.1ss})-(\ref{3.2ss}). Let $\pi_h$ be a shaped-regular triangulation of $\Omega$ composed of the elements ${\kappa}$.
Here we restrict our attention to  edge   elements on tetrahedra because the argument for edge   elements on hexahedra
is the same.
The $k$($k\ge0$) order edge element of the first family \cite{nedelec} generates the space
$$\mathbf{V}_h=\{\mathbf{u}_h\in\mathbf H_0(\mathrm{curl},\Omega):\mathbf{u}_h|_\kappa\in [P_\kappa(k)]^d\bigoplus\mathbf{x}\times [\widetilde{P}_\kappa(k)]^d\},$$
where $P_\kappa(k)$ is the polynomial space of  degree less than or equal to $ k$ on $\kappa$, ${\widetilde{P}}_\kappa(k)$ is the
homogeneous polynomial space of  degree $ k$  on $\kappa$, and $\b x =(x_1,\cdots,x_d)^T$.
We also introduce the discrete divergence-free space
$$\mathbf X_h=\{\mathbf{u}_h\in\mathbf V_h: (\epsilon\mathbf{u}_h,\nabla p)=0,~~ \forall p \in U_h\},$$
where $U_h$ is the standard  Lagrangian finite element space vanishing on $\partial\Omega$ of total degree less than or
equal to $k+1$ and $\nabla U_h\subset \mathbf{V}_h.$ 

The standard finite element discretization of (\ref{2.5}) is stated as: Find $(\lambda_h,\mathbf{u}_h)\in R\times\b \x_h$ and $\mathbf{u}_h\neq0$ such that
\begin{eqnarray}\label{2.13}
a(\mathbf{u}_h,\mathbf{v}_h)=\lambda_h  (\epsilon\mathbf{u}_h,\mathbf{v}_h),~~\forall \mathbf{v}_h\in\mathbf \x_h.
\end{eqnarray}
It is also equivalent to the following form for  nonzero $\lambda_h$ (see \cite{zhou}):  
Find $(\lambda_h,\mathbf{u}_h)\in \mathbb{R}\times\b V_h$ and $\mathbf{u}_h\neq0$ such that
\begin{eqnarray}\label{2.14}
a(\mathbf{u}_h,\mathbf{v}_h)=\lambda_h  (\epsilon\mathbf{u}_h,\mathbf{v}_h),~~\forall \mathbf{v}_h\in\mathbf V_h.
\end{eqnarray}

In order to investigate the convergence of edge element discretization   (\ref{2.13}), we have to study the convergence of edge element discretization for
the associated Maxwell source problem. 
 Then by Lax-Milgram Theorem we can define the solution operator $T_h:\l2
\rightarrow \mathbf{X}_h$ as
\begin{eqnarray}\label{s2.5}
A(T_h\mathbf{f},\mathbf{v})=   (\epsilon\mathbf{f},\mathbf{v}),~\forall \mathbf{v}\in \mathbf{X}_h.
\end{eqnarray}
Then  the eigenvalue problem (\ref{2.13})  has the operator form
$$T_h\b u_h=\wt\lambda_h^{-1}\b u_h~with~\wt \lambda_h=\lambda_h+\frac{\gamma}{\beta}.$$
Introduce the  discrete form of (\ref{3.1ss})-(\ref{3.2ss}):
Find $(\lambda_h,\mathbf{u}_{h},\sigma_h)\in \mathbb{R}\times \b V_h\times U_h$,
$\mathbf{u}_{h}\ne0$ such that
\begin{eqnarray}
a(\mathbf{u}_{h},\mathbf{v})+\overline{b(\mathbf{v},\sigma_h)}&=&\lambda_h(\epsilon\mathbf{u}_{h},\mathbf{v}),
~~\forall \mathbf{v}\in \b V_h,\label{3.1s}\\
b(\mathbf{u}_{h},p)&=&0,~~\forall p\in U_h.\label{3.2s}
\end{eqnarray}
Introduce the corresponding operators: Find $\widetilde{T}_{h}\b f\in \b V_h$ and $S_{h}\b f \in   U_h$ for any $\b f \in\l2$
\begin{eqnarray}\label{s2.1}
A(\widetilde{T}_{h}\b f,\mathbf{v})+\overline{b(\mathbf{v},S_{h}\b f)}&=& (\epsilon\mathbf{f},\mathbf{v}),~~\forall \mathbf{v}\in\b V_h,\\\label{s2.2}
b(\widetilde{T}_{h}\b f,p)&=&0,~~\forall p\in U_h.
\end{eqnarray}
Due to discrete Helmholtz decomposition $\mathbf{V}_h=\nabla U_h\bigoplus \mathbf X_h$, it is easy to know $R(T_h)\subset\x_h$ and $S_h\b f=0$, $T_h\b f=\widetilde{T}_h\b f$ for any $\b f\in\x+\x_h$.
Hence $T_h$ and $\widetilde{T}_h$ share the same eigenpairs.

 Similar to (\ref{s2.3})-(\ref{s2.4}), one can  verify the corresponding LBB condition for the discrete mixed  form  (\ref{s2.1})-(\ref{s2.2}).
 According to the theory of mixed finite elements (see \cite{brezzi}), we get for all $\b f\in\l2$,
\begin{eqnarray}\label{15}
\|\wt T_h \mathbf f \|_{A}+\|\wt T \mathbf f \|_{A}+|S_h\b f|_1+|S\b f|_1\le \1 \| \mathbf f\|_{0,\epsilon},
\end{eqnarray}
\begin{eqnarray}\label{s2.18}
\|\wt T \mathbf f-\wt T_h\mathbf{f}\|_{A}+|S \b f-S_h \b f|_1\le \2(\inf\limits_{\mathbf{v}_h\in\mathbf V_h}\|\widetilde{T} \mathbf f-\mathbf{v}_h\|_{\mathrm{curl}}+\inf\limits_{v_h\in  U_h}|S \b f-v_h|_1).
\end{eqnarray}
Similar to (\ref{s2.6}) we can prove $\wt T_h:\l2\rt\l2$ is self-adjoint in the sense of $(\epsilon\cdot,\cdot)_{0}$. In fact, $\forall \b w,\b v\in\l2$,
\begin{eqnarray*}
(\epsilon \b w,\wt T_h\b v)=A(\wt T_h \b w,\wt T_h\b v)=\overline{A(\wt T_h \b v,\wt T_h\b w)}=
\overline{(\epsilon \b v,\wt T_h\b w)}= (\epsilon \wt T_h \b w,\b v).
\end{eqnarray*}

 The discrete compactness is a very interesting and important property in edge elements because it is intimately related to the property of the collective compactness. Kikuchi \cite{kikuchi1} first successfully applied this property to numerical analysis of electromagnetic problems,
 and more recently it was further developed by \cite{boffi2,boffi3,caorsi,kirsch,monk2} and so on.
  The following lemma, which states the discrete compactness of $\x_h$ into $\l2$, is a direct citation of Theorem 4.9 in \cite{hiptmair1}.
\begin{lemma}(Discrete compactness property) Any sequence $\{\b v_h\}_{h>0}$ with $\b v_h\in \x_h$ that is uniformly bounded in $\b H(\mathrm{curl},\Omega)$ contains a subsequence that converges strongly in $\b L^2(\Omega)$.
\end{lemma}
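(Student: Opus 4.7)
The plan is to split each $\b v_h$ into a continuously divergence-free part plus a gradient, apply Weber-type compactness to the former, and force the latter to vanish in $\l2$ by exploiting the discrete constraint $\b v_h\in\x_h$. Concretely, for each $h$ introduce $\phi_h\in H_0^1(\Omega)$ as the unique solution of
\begin{equation*}
(\epsilon\nabla\phi_h,\nabla q)=(\epsilon\b v_h,\nabla q),\quad\forall q\in H_0^1(\Omega),
\end{equation*}
and set $\b z_h:=\b v_h-\nabla\phi_h$. A direct check gives $\b z_h\in\x$ with $\mathrm{curl}\,\b z_h=\mathrm{curl}\,\b v_h$ and, by a Pythagoras identity in $(\epsilon\cdot,\cdot)$, $\|\b z_h\|_{0,\epsilon}\le\|\b v_h\|_{0,\epsilon}$. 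Hence $\{\b z_h\}$ is uniformly bounded in $\x$ endowed with $\|\cdot\|_{\mathrm{curl}}$, and Weber's compactness $\x\hookrightarrow\l2$ (Corollary 4.3 of \cite{hiptmair1}) furnishes a subsequence, still denoted $\b z_h$, that converges strongly in $\l2$.

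It then suffices to show $\|\nabla\phi_h\|_0\to 0$ along the same subsequence. The key is the discrete divergence-free condition $(\epsilon\b v_h,\nabla p_h)=0$ for all $p_h\in U_h$. Choosing $p_h=J_h\phi_h$ with $J_h:H_0^1(\Omega)\to U_h$ a stable quasi-interpolant (e.g.\ Scott-Zhang), and testing the defining equation for $\phi_h$ with $q=\phi_h$, one obtains the Galerkin-orthogonality bound
\begin{equation*}
\|\nabla\phi_h\|_{0,\epsilon}^2=(\epsilon\b v_h,\nabla\phi_h)=(\epsilon\b v_h,\nabla\phi_h-\nabla J_h\phi_h)\le\|\b v_h\|_{0,\epsilon}\,\|\nabla(\phi_h-J_h\phi_h)\|_{0,\epsilon}.
\end{equation*}
Convergence to zero now reduces to an interpolation estimate $\|\nabla(\phi_h-J_h\phi_h)\|_0\lesssim h^s\|\phi_h\|_{H^{1+s}}$ together with a uniform regularity bound $\|\phi_h\|_{H^{1+s}}\le\1$.

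The main obstacle is precisely this uniform $H^{1+s}$-regularity of $\phi_h$. The equation for $\phi_h$ has right-hand side $-\mathrm{div}(\epsilon\b v_h)$, which only lives in $H^{-1}(\Omega)$ because $\b v_h$ is merely an $\l2$-bounded edge-element field and $\epsilon$ may be discontinuous and only $L^\infty$, so standard elliptic $H^2$-regularity is unavailable. One must invoke a Meyers-type $W^{1,p}$ shift theorem, or the Jerison-Kenig theory for second-order elliptic operators on Lipschitz polyhedra, to extract a universal exponent $s>0$ depending only on $\Omega$ and on the ellipticity constants $\beta,\gamma$ in (\ref{2.4}). Once that uniform regularity is in hand, $\|\nabla\phi_h\|_0\to0$ and hence $\b v_h=\b z_h+\nabla\phi_h$ converges strongly in $\l2$ along the subsequence, which is the assertion of the lemma.
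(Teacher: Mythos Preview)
The paper does not supply its own proof of this lemma; it simply cites Theorem 4.9 of \cite{hiptmair1}. Your Helmholtz splitting $\b v_h=\b z_h+\nabla\phi_h$ with $\b z_h\in\x$ is indeed the standard opening move of the Kikuchi-type argument underlying that reference, and the compactness of $\{\b z_h\}$ via $\x\hookrightarrow\l2$ is correct.

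The genuine gap is in your treatment of $\nabla\phi_h$. A Meyers estimate yields at best $\nabla\phi_h\in L^p$ for some $p>2$, and only if the datum $\epsilon\b v_h$ is itself uniformly bounded in $L^p$---which is not implied by the hypothesis $\b v_h$ bounded in $\b H(\mathrm{curl},\Omega)$. Even granting an $L^p$ bound, it does not furnish the uniform $H^{1+s}$ control you need for the Scott--Zhang rate $\|\nabla(\phi_h-J_h\phi_h)\|_0\lesssim h^s\|\phi_h\|_{H^{1+s}}$. The Jerison--Kenig theory concerns the constant-coefficient Laplacian, not $\mathrm{div}(\epsilon\nabla\cdot)$ with discontinuous $\epsilon$; for such operators a uniform $H^{1+s}$ shift is generally unavailable. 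So the step ``once that uniform regularity is in hand'' does not go through.

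The classical fix---and what is actually done in \cite{hiptmair1}---is to put the regularity burden on $\b z_h$ rather than on $\phi_h$. Since $\b z_h\in\x$ one has $\b z_h\in (H^\sigma(\Omega))^d$ for some $\sigma>1/2$ (cf.\ Remark~2.2), so the N\'ed\'elec interpolant $\Pi_h\b z_h$ is well defined with $\|\b z_h-\Pi_h\b z_h\|_0\to0$. The commuting diagram gives $\mathrm{curl}(\b v_h-\Pi_h\b z_h)=0$ because $\mathrm{curl}\b z_h=\mathrm{curl}\b v_h$ already lies in the discrete curl range; hence $\b v_h-\Pi_h\b z_h=\nabla p_h$ for some $p_h\in U_h$. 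Using $\b v_h\in\x_h$ one then obtains
\[
\|\nabla\phi_h\|_{0,\epsilon}^2=(\epsilon\b v_h,\nabla\phi_h)=(\epsilon\b v_h,\nabla\phi_h-\nabla p_h)=(\epsilon\b v_h,\Pi_h\b z_h-\b z_h)\le\|\b v_h\|_{0,\epsilon}\,\|\Pi_h\b z_h-\b z_h\|_{0,\epsilon}\to0,
\]
which closes the argument with no regularity assumption on $\phi_h$ whatsoever.
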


In the remainder of this subsection, we will prove the error estimates for the discrete forms (\ref{3.1s})-(\ref{3.2s}),
(\ref{2.13}) or (\ref{2.14}) with $\lambda_h\ne0$.
The authors in \cite{yang4} have built a general analysis framework for
the a priori error estimates of mixed form (see Theorem 2.2 and Lemma 2.3 therein). Although we cannot directly apply their theoretical results to
the mixed discretization (\ref{3.1s})-(\ref{3.2s}), we can use its proof idea
to derive the following Lemma 2.2 and Theorem 2.3.
The following uniform convergence provides us with the possibility
 to use the spectral approximation theory in \cite{babuska}.
\begin{lemma} There holds the uniform convergence
\begin{eqnarray*}
&\|\widetilde{T}-\widetilde{T}_h\|_{\l2}\rightarrow0,~h\rightarrow0.
\end{eqnarray*}
\end{lemma}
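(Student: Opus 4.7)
The plan is to argue by contradiction using weak compactness in $\mathbf{L}^2(\Omega)$, compactness of $\widetilde{T}$, and the discrete compactness property (Lemma 2.1). Specifically, if $\|\widetilde T-\widetilde T_h\|_{\l2}\not\rightarrow 0$, then there is a $\delta>0$, a sequence $h_n\rightarrow 0$, and $\{\mathbf f_n\}\subset\l2$ with $\|\mathbf f_n\|_0=1$ and $\|(\widetilde T-\widetilde T_{h_n})\mathbf f_n\|_0\ge \delta$. Since $\{\mathbf f_n\}$ is bounded, extract a subsequence with $\mathbf f_n\rightharpoonup \mathbf f$ in $\l2$. Compactness of $\widetilde T:\l2\rightarrow\l2$ then yields $\widetilde T\mathbf f_n\rightarrow \widetilde T\mathbf f$ strongly in $\l2$, so it suffices to show $\widetilde T_{h_n}\mathbf f_n\rightarrow \widetilde T\mathbf f$ strongly in $\l2$ to reach a contradiction.

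By (\ref{15}) the sequence $\{\widetilde T_{h_n}\mathbf f_n\}\subset\x_{h_n}$ is uniformly bounded in $\c0$, and $\{S_{h_n}\mathbf f_n\}$ is uniformly bounded in $H^1_0(\Omega)$. I would apply the discrete compactness property (Lemma 2.1) to extract a subsequence with $\widetilde T_{h_n}\mathbf f_n\rightarrow \mathbf z$ strongly in $\l2$ (and hence, using weak $\c0$-boundedness, also weakly in $\c0$ with the same limit $\mathbf z\in\c0$), together with $S_{h_n}\mathbf f_n\rightharpoonup s$ weakly in $H^1_0(\Omega)$. To show $\mathbf z\in\x$, pick $p\in H^1_0(\Omega)$ and approximate by $p_n\in U_{h_n}$ with $p_n\rightarrow p$ in $H^1_0(\Omega)$; since $b(\widetilde T_{h_n}\mathbf f_n,p_n)=0$ by (\ref{s2.2}), the strong $\l2$ convergence of $\widetilde T_{h_n}\mathbf f_n$ and strong $\l2$ convergence of $\nabla p_n$ let us pass to the limit to get $b(\mathbf z,p)=0$.

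Next, to identify $\mathbf z=\widetilde T\mathbf f$, I test with arbitrary $\mathbf v\in\x$ and approximate it by $\mathbf v_n\in\mathbf V_{h_n}$ with $\mathbf v_n\rightarrow \mathbf v$ in $\c0$. In the discrete equation
\begin{eqnarray*}
A(\widetilde T_{h_n}\mathbf f_n,\mathbf v_n)+\overline{b(\mathbf v_n,S_{h_n}\mathbf f_n)}=(\epsilon\mathbf f_n,\mathbf v_n),
\end{eqnarray*}
the weak-strong pairings in $\l2$ (for both the $(\mu^{-1}\mathrm{curl}\cdot,\mathrm{curl}\cdot)$ and $(\epsilon\cdot,\cdot)$ parts of $A$, and for $(\epsilon\mathbf f_n,\mathbf v_n)$) and the convergence of the $b$-term to $b(\mathbf v,s)=0$ (which vanishes because $\mathbf v\in\x$) yield in the limit $A(\mathbf z,\mathbf v)=(\epsilon\mathbf f,\mathbf v)$ for every $\mathbf v\in\x$. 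Combined with $\mathbf z\in\x$, this is exactly the defining equation (\ref{s2.7}) for $T\mathbf f$, so $\mathbf z=T\mathbf f=\widetilde T\mathbf f$. Therefore $\widetilde T_{h_n}\mathbf f_n\rightarrow \widetilde T\mathbf f$ strongly in $\l2$, contradicting $\|(\widetilde T-\widetilde T_{h_n})\mathbf f_n\|_0\ge\delta$.

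The main obstacle is the identification step in the last paragraph: because the spaces $\mathbf X_h$ are not nested in $\mathbf X$, I cannot approximate $\mathbf v\in\x$ by elements of $\mathbf X_{h_n}$ and must instead handle the Lagrange multiplier term $b(\mathbf v_n,S_{h_n}\mathbf f_n)$ by exploiting the uniform $H^1_0$-bound on $S_{h_n}\mathbf f_n$ from (\ref{15}) and the fact that $\mathbf v\in\x$ makes the limit vanish. The discrete compactness of $\x_h$ is the other crucial ingredient; without it, strong $\l2$ convergence of $\widetilde T_{h_n}\mathbf f_n$ along the chosen subsequence would not be available.
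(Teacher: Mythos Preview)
Your argument is correct and complete: the contradiction setup, the use of the discrete compactness property (Lemma~2.1) on $\widetilde T_{h_n}\mathbf f_n\in\mathbf X_{h_n}$, the passage to the limit in $b(\widetilde T_{h_n}\mathbf f_n,p_n)=0$ to obtain $\mathbf z\in\mathbf X$, and the identification $\mathbf z=T\mathbf f=\widetilde T\mathbf f$ via weak--strong pairings in the mixed equation all go through as stated. In particular, your handling of the Lagrange multiplier term (bounding $S_{h_n}\mathbf f_n$ in $H^1_0$ via (\ref{15}), extracting a weak $H^1_0$-limit, and using $\mathbf v\in\mathbf X$ to kill $b(\mathbf v,s)$) correctly circumvents the non-nestedness of $\mathbf X_h$ in $\mathbf X$.

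Your route is genuinely different from the paper's. The paper first establishes \emph{pointwise} convergence $\widetilde T_h\mathbf f\to\widetilde T\mathbf f$ from the C\'ea-type estimate (\ref{s2.18}) and density of the finite element spaces; then it observes that $\cup_{h>0}(\widetilde T-\widetilde T_h)B$ is relatively compact in $\mathbf L^2(\Omega)$ (combining the compact embedding $\mathbf X\hookrightarrow\mathbf L^2$ with discrete compactness of $\mathbf X_h$), which yields \emph{collectively compact} convergence; finally it invokes the general fact (Chatelin, Proposition~3.7) that for self-adjoint operators collectively compact convergence upgrades to norm convergence. By contrast, you never appeal to the abstract collective-compactness framework or to self-adjointness: your direct subsequence/identification argument is fully self-contained and more elementary. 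The paper's approach is shorter once the operator-theoretic machinery is granted and makes the structural reason (collective compactness plus self-adjointness) transparent; your approach avoids external references and the identification technique is a robust template that also applies when self-adjointness is unavailable.
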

\begin{proof}
Since $\cup_{h>0}\b V_h$ and $\cup_{h>0}  U_h$ are dense in $\c0$ and $ H^1_0(\Omega)$, respectively, we deduce from (\ref{s2.18}) for any $\b f\in \l2$
\begin{eqnarray*}
\|\wt T \mathbf f-\wt T_h\mathbf{f}\|_{A} \le \2(\inf\limits_{\mathbf{v}_h\in\mathbf V_h}\|\widetilde{T} \mathbf f-\mathbf{v}_h\|_{\mathrm{curl}}+\inf\limits_{v_h\in  U_h}|S\b f-v_h|_1)\rt 0.
 \end{eqnarray*}
 That is,  $\wt T_h$ converges to $\wt T$ pointwisely.
 Since $\wt T,\wt T_h : \l2\rt\c0$ are linear bounded uniformly with respect to $h$, $\cup_{h>0}(\wt T-\wt T_h)B$ is a bounded set in $\c0$ where $B$ is the unit ball in $\l2$. From
$\x\hookrightarrow \l2$ compactly and the discrete compactness property of $\x_h$ in Lemma 2.1, we
know that $\cup_{h>0}(\wt T-\wt T_h)B$ is a relatively compact set in $\l2$, which implies  collectively
compact convergence $\wt T_h\rt\wt T$.  Noting $\wt T, \wt T_h: \l2\rt \l2$ are self-adjoint,   due to Proposition 3.7 or Table 3.1 in \cite{chatelin} we get  $\|\widetilde{T}-\widetilde{T}_h\|_{\l2}\rightarrow0,~h\rightarrow0.$
 This ends the proof.
\end{proof}
Prior to proving the error estimates for   edge element discretizations, we define some notations as follows.
Let $\lambda$ be the $k$th eigenvalue of (\ref{2.5}) or (\ref{3.1ss})-(\ref{3.2ss}) of multiplicity $q$.
Let $\lambda_{j,{h}}~(j=k,k+1,\cdots,{k+q-1})$ be eigenvalues of $T_{h}$ that converge to the eigenvalue $\lambda=\lambda_k=\cdots=\lambda_{k+q-1}$.
Here and hereafter we use ${M}(\lambda)$ to denote the space spanned by all eigenfunctions  corresponding to the eigenvalue $\lambda$,
and ${M_h}(\lambda)$ to denote the direct sum of all eigenfunctions corresponding to the eigenvalues $\lambda_{j,{h}}~(j=k,k+1,\cdots,{k+q-1})$.
For argument convenience,
hereafter we denote $\wt\lambda_{j}=\lambda_{j}+\frac{\gamma}{\beta}$ and
 $\wt\lambda_{j,h}=\lambda_{j,h}+\frac{\gamma}{\beta}$.
 Now we introduce the following small quantity:
\begin{eqnarray*}
 &&\delta_h(\lambda)=\sup_{\mathbf{u}\in {M}(\lambda),\atop\|\b u\|_{A}=1} \inf_{\mathbf{v}\in \b V_{h}}\|\mathbf{u}-\mathbf{v}\|_{A}.
\end{eqnarray*}
Thanks to (\ref{s2.18}) and (\ref{2.7}) we have
\begin{eqnarray}\label{2.21}
 \wt\lambda\|(  \wt T_{h}-  \wt T)|_{M(\lambda)}\|_{A}\le\2\wt\lambda \sup_{\b u\in {M}(\lambda),\atop \|\b u\|_{A}=1}
 \inf_{\b v_h\in\b V_h}\|  T\b u-  \b v_h\|_{\mathrm{curl}}\le\2\sqrt\gamma\delta_{h}(\lambda).
\end{eqnarray}

The error estimates of edge elements for  the Maxwell eigenvalue problem have been obtained in, e.g., \cite{boffi1,boffi3,monk2,russo}. Here we would like to use the quantity $\delta_h(\lambda)$ to characterize the error
for eigenpairs.
From the  spectral approximation, we actually derive the a priori error estimates for the discrete eigenvalue
problem  (\ref{2.14}) with $\lambda_h\ne0$, (\ref{2.13}) or (\ref{3.1s})-(\ref{3.2s}).
\begin{theorem}
Let $\lambda$  be the  eigenvalue of (\ref{2.5}) or (\ref{3.1ss})-(\ref{3.2ss}) and let  $\lambda_h$ be the discrete eigenvalue of
(\ref{2.13}) or (\ref{3.1s})-(\ref{3.2s}) converging to
$\lambda$.
There exist   $h_0 > 0$ such that if $h \le h_0$ then
for any eigenfunction $\b u_h$ corresponding to $\lambda_h$ with $\|\b u_h\|_A = 1$ there exists $\b u \in
M(\lambda)$ such that
\begin{eqnarray}\label{2.22}
 &&\|\b u-\b u_h\|_{A}\le\3\delta_h(\lambda)
\end{eqnarray}
and for any $\b u\in M(\lambda)$ with  $\|\b u\|_A=1$ there exists $\b u_h\in M_h(\lambda)$ such that
\begin{eqnarray}\label{2.23}
 &&\|\b u-\b u_h\|_{A}\le\3\delta_h(\lambda),
\end{eqnarray}
where the positive constant $\3$ is independent of   mesh parameters.
\end{theorem}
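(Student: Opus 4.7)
The plan is to apply the classical Babuska--Osborn spectral approximation theory (\cite{babuska}) to the pair $(\wt T,\wt T_h)$, using Lemma~2.2 as the crucial hypothesis and (\ref{2.21}) to deliver the sharp $A$-norm rate. By Lemma~2.2, $\wt T_h\rt \wt T$ in the operator norm of $\l2$; since $\wt T$ is compact and self-adjoint in the $(\epsilon\cdot,\cdot)$ inner product, standard perturbation theory then guarantees that, for $h$ small enough, the part of the spectrum of $\wt T_h$ near $\wt\lambda^{-1}$ consists of exactly $q$ eigenvalues $\wt\lambda_{j,h}^{-1}$ counted with multiplicity, the associated spectral projection $E_h$ has rank $q$, and $E_h\rt E$ in $\mathcal L(\l2)$, where $E$ is the spectral projection of $\wt T$ onto $M(\lambda)$.

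To prove (\ref{2.22}), given $\b u_h\in M_h(\lambda)$ with $\|\b u_h\|_A=1$, I would set $\b u:=E\b u_h/\|E\b u_h\|_A\in M(\lambda)$ (well defined since $E_h\rt E$ in $\l2$). The abstract theory yields, in the first instance, the $\l2$-estimate
$$\|\b u-\b u_h\|_{0,\epsilon}\lesssim \|(\wt T-\wt T_h)|_{M(\lambda)}\|_{\mathcal L(\l2)}\lesssim \delta_h(\lambda),$$
where the last bound is a weaker consequence of (\ref{2.21}). To upgrade to the $A$-norm, I would use the eigenfunction equations $\wt T\b u=\wt\lambda^{-1}\b u$ and $\wt T_h\b u_h=\wt\lambda_{j,h}^{-1}\b u_h$ to write the telescoping identity
$$\b u-\b u_h=\wt\lambda\wt T\b u-\wt\lambda_{j,h}\wt T_h\b u_h=\wt\lambda(\wt T-\wt T_h)\b u+(\wt\lambda-\wt\lambda_{j,h})\wt T_h\b u+\wt\lambda_{j,h}\wt T_h(\b u-\b u_h).$$
In $A$-norm: the first term is bounded by $\delta_h(\lambda)$ directly from (\ref{2.21}); the second term is of higher order $\delta_h(\lambda)^2$ via the eigenvalue error estimate (a Rayleigh quotient argument using self-adjointness); and the third term is controlled by the previously obtained $\l2$-estimate composed with the uniform smoothing bound $\|\wt T_h\b g\|_A\lesssim\|\b g\|_{0,\epsilon}$ from (\ref{15}). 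Summing these yields (\ref{2.22}). The estimate (\ref{2.23}) is proved symmetrically by exchanging the roles of $(\b u,E,\wt T)$ and $(\b u_h,E_h,\wt T_h)$, starting from $\b u_h:=E_h\b u/\|E_h\b u\|_A$.

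The main obstacle is the mixed-norm nature of the argument: operator norm convergence is only available in $\l2$ (Lemma~2.2), while the hypothesis (\ref{2.21}) and the desired conclusion both live in the $A$-norm. The bridge is the smoothing estimate (\ref{15}), which lets a single application of $\wt T_h$ lift an $\l2$ bound on the eigenfunction error into an $A$-norm bound; executing this bootstrap cleanly requires tracking the normalizations ($\|E\b u_h\|_A\to 1$ uniformly) and ordering the subarguments so that the higher-order eigenvalue error estimate, which itself follows from the eigenfunction estimate, is not invoked prematurely.
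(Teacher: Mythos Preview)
Your proposal is correct and follows essentially the same route as the paper: invoke Babu\v{s}ka--Osborn theory via Lemma~2.2 to obtain the $\l2$-estimates $\|\b u_h-\b u\|_{0,\epsilon}\lesssim\|(\wt T-\wt T_h)|_{M(\lambda)}\|_{0,\epsilon}$ and $|\lambda_{j,h}-\lambda|\lesssim\|(\wt T-\wt T_h)|_{M(\lambda)}\|_{0,\epsilon}$, then bootstrap to the $A$-norm via exactly your telescoping identity combined with (\ref{2.21}) and the smoothing bound (\ref{15}). One small caveat: the proof of (\ref{2.23}) is not perfectly symmetric, because $\b u_h:=E_h\b u/\|E_h\b u\|_A$ is generally a combination of eigenfunctions with distinct eigenvalues $\wt\lambda_{j,h}$, so $\wt T_h\b u_h\neq\wt\lambda_{j,h}^{-1}\b u_h$; the paper handles this by expanding $\b u_h=\sum_{j=k}^{k+q-1}\b{\overline u}_{j,h}$ in an $(\epsilon\cdot,\cdot)$-orthogonal eigenbasis of $M_h(\lambda)$ before telescoping, which is the easy fix you would need.
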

\begin{proof} We take $\lambda=\lambda_k$.
Suppose  $\b u_h$ is an eigenfunction of (\ref{3.1s})-(\ref{3.2s}) corresponding to $\lambda_h$ satisfying
$\|\b u_h\|_A=\sqrt{\wt \lambda_h}\|\b u_h\|_{0,\epsilon} = 1$.
Then according to Theorems 7.1 and 7.3 in \cite{babuska} and Lemma 2.2 there exists $\b u\in M(\lambda)$ satisfying
\begin{eqnarray}\label{2.28}
&&\|\b u_h-\b u\|_{0,\epsilon}\lesssim \|(\wt T-\wt T_h)|_{M(\lambda)}\|_{0,\epsilon},\\
&&|\lambda_{j,h}-\lambda|\lesssim \|(\wt T-\wt T_h)|_{M(\lambda)}\|_{0,\epsilon}~for~j=k,\cdots,k+q-1.\label{2.29}
\end{eqnarray}
By a simple calculation, we deduce
\begin{eqnarray*}
|\|\b u_h-\b u\|_A- \|\wt \lambda(\wt T-\wt T_h)\b u\|_A|&=&|\|\wt \lambda_h \wt T_h\b u_h-\wt \lambda \wt T\b u\|_A-
\|\wt \lambda(\wt T-\wt T_h)\b u\|_A|\\
&\le&\|\wt T_h(\wt \lambda_h\b u_h-\wt \lambda \b u)\|_A\\
&\lesssim &\|\wt \lambda_h\b u_h-\wt \lambda \b u\|_{0,\epsilon}\\
&\lesssim &|\lambda_h-\lambda|\b\|\b u_h\|_{0,\epsilon}+\wt \lambda\|\b u_h- \b u\|_{0,\epsilon}.
\end{eqnarray*}
Since the equality  (\ref{2.21}) implies $\|(\wt T-\wt T_h)|_{M(\lambda)}\|_{0,\epsilon}\lesssim\delta_h(\lambda)$,
this together with (\ref{2.28})-(\ref{2.29})  yields (\ref{2.22}).
Conversely, suppose $\b u$ is an eigenfunction of (\ref{3.1ss})-(\ref{3.2ss}) corresponding to $\lambda$
 satisfying $\|\b u\|_A =\sqrt{\wt \lambda}\|\b u\|_{0,\epsilon} =  1$. Then according to    Theorems 7.1 in \cite{babuska} and Lemma 2.1 there exists $\b u_h\in M_h(\lambda)$ satisfying
 \begin{eqnarray}\label{2.30}
 \|\b u_h-\b u\|_{0,\epsilon}\lesssim \|(\wt T-\wt T_h)|_{M(\lambda)}\|_{0,\epsilon}.
 \end{eqnarray}
 Let $\b u_h=\sum_{j=k}^{k+q-1} \b{\overline{u}}_{j,h}$ where
 $\b{\overline{u}}_{j,h}$  is the eigenfunction corresponding to $\lambda_{j,h}$ such that
$\{\b{\overline{u}}_{j,{h}}\}_{j=k}^{k+q-1}$ constitutes an orthogonal basis of $M_{h}(\lambda)$ in
$(\epsilon\cdot,\cdot)$. Then
\begin{eqnarray*}
&&|\|\b u_h-\b u\|_A- \|\wt\lambda(\wt T-\wt T_h)\b u\|_A|\le\|\b u_h-\wt T_h(\wt \lambda \b u)\|_A\\
&&~~~\lesssim \|\wt T_h(\sum_{j=k}^{k+q-1} \wt \lambda_{j,h}\b{\overline{u}}_{j,h}-\wt\lambda \b u)\|_{A}\\
&&~~~\lesssim \| \sum_{j=k}^{k+q-1} (\wt\lambda_{j,h}-\wt\lambda_h)\b{\overline{u}}_{j,h}+\wt\lambda_h\b u_h-\wt\lambda \b u\|_{0,\epsilon}.
\end{eqnarray*}
Since  $\|(\wt T-\wt T_h)|_{M(\lambda)}\|_{0,\epsilon}\lesssim\delta_h(\lambda)$,
this together with (\ref{2.29})-(\ref{2.30}) yields (\ref{2.23}).
\end{proof}

\indent{\bf Remark 2.1.} Based on the estimate (\ref{2.22}), one can naturally obtain the optimal convergence order
$\mathcal{O}(\delta_h^2(\lambda))$ for  $|\lambda_h-\lambda|$ using the Rayleigh quotient relation (\ref{s2.12})
in the following section.
In addition, note that when $\lambda\ne0$ in Theorem 2.2 the estimate (\ref{2.22}) implies $\|\b u_h\|_a$
converges to  $\|\b u\|_a=\sqrt{\lambda}\|\b u\|_{0,\epsilon}>0$.
Here we introduce $\b{\wh u}_h= \frac{\b u_h}{\|\b u_h\|_a}$ then $\b{u}_h= \frac{\b{\wh u}_h}{\|\b{\wh u}_h\|_A}$ and
(\ref{2.22}) gives
\begin{eqnarray}\label{2.24}
 &&\|\b u-\b{\widehat{u}}_h\|_{A}\le\3\delta_h(\lambda).
\end{eqnarray}
For simplicity of notation, we still use the same $C_3$ and $\b u$ in the above estimate as in (\ref{2.22})-(\ref{2.23}).

\indent{\bf Remark 2.2.}
When $\Omega$ is a Lipschitz polyhedron and $\epsilon,\mu$ are properly smooth, it is known that
$\x\subset \b (H^\sigma(\Omega))^3$ ($\sigma\in(1/2,1]$) (see \cite{girault,boffi2,monk}) and
 $\delta_h(\lambda)\lesssim h^\sigma$. In particular, if $\x\subset  \{\b v\in\b H^s(\Omega):\mathrm{curl}\b v\in \b H^s(\Omega)\}$ ($1\le s\le k+1$) then
$\delta_h(\lambda)\lesssim h^s $ (see  Theorem 5.41 in \cite{monk}).
\section{Multigrid schemes based on shifted inverse iteration}
\label{sec:2}
\subsection{Multigrid Schemes}
In practical computation, the information on the physical zero eigenvalues can be easily captured on a coarse mesh $H$
 using  the mixed discretization (\ref{3.1s})-(\ref{3.2s}).
In this section we shall present our multigrid methods for solving nonzero Maxwell eigenvalue.
The following schemes are proposed by  \cite{yang2,yang3}. Note that we assume in the following schemes the numerical eigenvalue
$\lambda_H$ approximates the nonzero eigenvalue $\lambda$.\\
\textbf{Scheme 3.1.~~} Rayleigh quotient iteration.\\
Given the maximum number of iterative times $l$.\\
\textbf{Step 1}. Solve the eigenvalue problem (\ref{s1})-(\ref{s3}) on coarse finite element space $\b V_H\times U_H$: find $(\lambda_H,\mathbf{u}_{H},\sigma_H)\in R\times \b V_H\times U_H$,
$\|\mathbf{u}_{H}\|_a=1$ such that
\begin{eqnarray*}
a(\mathbf{u}_{H},\mathbf{v})+\overline{b(\mathbf{v},\sigma_H)}&=&\lambda_H(\epsilon\mathbf{u}_{H},\mathbf{v}),~~\forall \mathbf{v}\in \b V_H,\\
b(\mathbf{u}_{H},p)&=&0,~~\forall p\in U_H.
\end{eqnarray*}
\textbf{Step 2}. $\mathbf{u}^{h_0}\Leftarrow \mathbf{u}_{H},~\lambda^{h_0}\Leftarrow \lambda_H,~i\Leftarrow 1$.\\
\textbf{Step 3}. Solve an equation on $\b V_{h_{i}} $: find $(\mathbf{u}',\sigma')\in \b V_{h_i}$ such that
\begin{eqnarray*}
a(\mathbf{u'},\mathbf{v})-\lambda^{h_{i-1}}(\epsilon\mathbf{u'},\mathbf{v})&=&(\epsilon\mathbf{u}^{h_{i-1}},\mathbf{v}),~~\forall \mathbf{v}\in \b V_{h_i}.
\end{eqnarray*}
\indent Set 
$\mathbf{u}^{h_i}=\mathbf{u'}/\|\mathbf{u'}\|_a .$
\\
\textbf{Step 4}. Compute the Rayleigh quotient
\begin{eqnarray*}
\lambda^{h_i}=\frac{a(\mathbf{u}^{h_{i}},\mathbf{u}^{h_{i}})}{(\epsilon\mathbf{u}^{h_{i}},\mathbf{u}^{h_{i}})}.
\end{eqnarray*}
\textbf{Step 5}. If $i=l$, then output $(\lambda^{h_{i}},\mathbf{u}^{h_{i}})$, stop; else, $i\Leftarrow i+1$, and return to step 3.\\

In Step 3 of the above Scheme, when the shift $\lambda^{h_{l-1}}$ is close to the exact eigenvalue enough,
the coefficient matrix of linear equation is nearly singular. Hence the following algorithm gives a natural
way of handling this problem.

\textbf{Scheme 3.2.~~} Inverse iteration with fixed shift.\\
Given the maximum number of iterative times $l$ and $i0$.\\
\textbf{Step 1-Step 4}. The same as Step 1-Step 4 of Scheme 3.1.
\\
\textbf{Step 5}. If $i>i0$ then $i\Leftarrow i+1$ and return to Step 6; else $i\Leftarrow i+1$ and return to Step 3.\\
\textbf{Step 6}. Solve an equation on $\b V_{h_{i}} $: find $(\mathbf{u}',\sigma')\in \b V_{h_i}$ such that
\begin{eqnarray*}
a(\mathbf{u'},\mathbf{v})-\lambda^{h_{i0}}(\epsilon\mathbf{u'},\mathbf{v})&=&(\epsilon\mathbf{u}^{h_{i-1}},\mathbf{v}),~~\forall \mathbf{v}\in \b V_{h_i}.
\end{eqnarray*}
\indent Set 
$\mathbf{u}^{h_i}=\mathbf{u'}/\|\mathbf{u'}\|_a$.
\\
\textbf{Step 7}. Compute the Rayleigh quotient
\begin{eqnarray*}
\lambda^{h_i}=\frac{a(\mathbf{u}^{h_{i}},\mathbf{u}^{h_{i}})}{(\epsilon\mathbf{u}^{h_{i}},\mathbf{u}^{h_{i}})}.
\end{eqnarray*}
\textbf{Step 8}. If $i=l$, then output $(\lambda^{h_{i}},\mathbf{u}^{h_{i}})$, stop; else, $i\Leftarrow i+1$, and return to step 6.\\

\indent{\bf Remark 3.1.} The mixed discretization (\ref{3.1s})-(\ref{3.2s}) was adopted by the literatures
 \cite{kikuchi,arbenz,jiang}. As is proved in Theorem 2.2, using this discretization  we can compute the Maxwell eigenvalues
 without introducing    spurious eigenvalues. However, it is also a saddle point problem that is difficult
 to solve on a fine mesh (see \cite{arbenz,arbenz1}). Therefore, the multigrid schemes can properly overcome the difficulty since
 we only solve (\ref{3.1s})-(\ref{3.2s})
  on a coarse mesh, as shown in step 1 of Schemes 3.1 and 3.2. Moreover, in order to further improve the
   efficiency of solving the equation in Steps 3 and 6 in Schemes 3.1 and 3.2 the HX preconditioner in
   \cite{hiptmair2} is a good choice (see \cite{zhou}).


\subsection{Error Analysis}
In  this subsection, we aim to prove the error estimates for Schemes 3.1 and 3.2.
We shall analyze the constants in the error estimates are independent of mesh parameters and iterative times $l$.
First of all, we give
two useful lemmas.
\begin{lemma}\label{l1}
For any nonzero $\mathbf{u,v}\in \c0,$   there hold
\begin{eqnarray}\label{s3.1}
\|\frac{\mathbf{u}}{\|\mathbf{u}\|_A}-\frac{\mathbf{v}}{\|\mathbf{v}\|_A}\|_A\leq2\frac{\|\mathbf{u-v}\|_A}{\|\mathbf{u}\|_A},~~~
~\|\frac{\mathbf{u}}{\|\mathbf{u}\|_A}-\frac{\mathbf{v}}{\|\mathbf{v}\|_A}\|_A\leq2\frac{\|\mathbf{u-v}\|_A}{\|\mathbf{v}\|_A}.
\end{eqnarray}
\end{lemma}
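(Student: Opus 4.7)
The plan is to reduce both inequalities to a single triangle-inequality argument by exploiting the symmetry of the left-hand side in $\mathbf{u}$ and $\mathbf{v}$. Since $\|\frac{\mathbf{u}}{\|\mathbf{u}\|_A}-\frac{\mathbf{v}}{\|\mathbf{v}\|_A}\|_A$ is unchanged if we swap $\mathbf{u}$ and $\mathbf{v}$, proving the first inequality automatically gives the second with the roles of $\mathbf{u}$ and $\mathbf{v}$ interchanged.

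First I would perform the standard algebraic splitting
\[
\frac{\mathbf{u}}{\|\mathbf{u}\|_A}-\frac{\mathbf{v}}{\|\mathbf{v}\|_A}
=\frac{\mathbf{u}-\mathbf{v}}{\|\mathbf{u}\|_A}+\mathbf{v}\,\Big(\frac{1}{\|\mathbf{u}\|_A}-\frac{1}{\|\mathbf{v}\|_A}\Big)
=\frac{\mathbf{u}-\mathbf{v}}{\|\mathbf{u}\|_A}+\frac{\mathbf{v}\,(\|\mathbf{v}\|_A-\|\mathbf{u}\|_A)}{\|\mathbf{u}\|_A\,\|\mathbf{v}\|_A}.
\]
Taking $\|\cdot\|_A$ and using the triangle inequality gives
\[
\Big\|\frac{\mathbf{u}}{\|\mathbf{u}\|_A}-\frac{\mathbf{v}}{\|\mathbf{v}\|_A}\Big\|_A
\le \frac{\|\mathbf{u}-\mathbf{v}\|_A}{\|\mathbf{u}\|_A}+\frac{\big|\|\mathbf{v}\|_A-\|\mathbf{u}\|_A\big|}{\|\mathbf{u}\|_A}.
\]
Then I invoke the reverse triangle inequality $\big|\|\mathbf{v}\|_A-\|\mathbf{u}\|_A\big|\le\|\mathbf{u}-\mathbf{v}\|_A$ to collapse the bound to $2\|\mathbf{u}-\mathbf{v}\|_A/\|\mathbf{u}\|_A$, establishing the first inequality. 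The second inequality follows immediately by swapping the labels of $\mathbf{u}$ and $\mathbf{v}$, since the left-hand side is symmetric under this swap.

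There is no real obstacle here: the whole argument relies only on $\|\cdot\|_A$ being a norm on $\c0$ (already noted after its definition) and on nonvanishing denominators (guaranteed by the hypothesis that $\mathbf{u},\mathbf{v}$ are nonzero). No properties specific to Maxwell's equations, to the bilinear form $A$, or to the edge element framework enter the proof, so the calculation is essentially three lines once the splitting above is written down.
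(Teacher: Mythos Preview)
Your proof is correct and is exactly the standard argument. The paper itself does not supply a proof; it simply refers the reader to \cite{yang3}, whose proof is precisely the splitting plus triangle/reverse-triangle inequality computation you wrote down.
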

\begin{proof}See \cite{yang3}.\end{proof}

\begin{lemma}\label{l2}
Let $(\lambda,\b u)$ be an eigenpair of
(\ref{2.5}) or of (\ref{s5}) with $\lambda\ne0$, then for any $v \in \c0\backslash\{0\}$, the
Rayleigh quotient $R(\b v)=\frac{a(\b v,\b v)}{\|\b v\|_{0,\epsilon}^{2}}$ satisfies
\begin{eqnarray}\label{s2.12}
R(\b v)-\lambda=\frac{\|\b v-\b u\|_{a}^{2}}{\|\b v\|_{0,\epsilon}^{2}}-\lambda
\frac{\|\b v-\b u\|_{0,\epsilon}^{2}}{\|\b v\|_{0,\epsilon}^{2}}.
\end{eqnarray}
\end{lemma}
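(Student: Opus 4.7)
The plan is to prove this identity by direct algebraic expansion of the two numerators on the right-hand side, systematically using the eigenvalue equation $a(\b u,\b w)=\lambda(\epsilon\b u,\b w)$ for all $\b w\in\c0$ (valid here because for $\lambda\ne0$ the forms (\ref{2.5}) and (\ref{s5}) are equivalent, so $\b u$ may be tested against arbitrary $\b v\in\c0$).

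First I would expand
\begin{eqnarray*}
\|\b v-\b u\|_a^2 &=& a(\b v,\b v)-a(\b v,\b u)-a(\b u,\b v)+a(\b u,\b u),\\
\|\b v-\b u\|_{0,\epsilon}^2 &=& (\epsilon\b v,\b v)-(\epsilon\b v,\b u)-(\epsilon\b u,\b v)+(\epsilon\b u,\b u).
\end{eqnarray*}
Then I would substitute $a(\b u,\b v)=\lambda(\epsilon\b u,\b v)$ and, using Hermiticity of $a$ together with the fact that $\lambda\in\mathbb{R}$, also $a(\b v,\b u)=\overline{a(\b u,\b v)}=\lambda(\epsilon\b v,\b u)$, and finally $a(\b u,\b u)=\lambda\|\b u\|_{0,\epsilon}^2$.

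The key step is then to form the combination $\|\b v-\b u\|_a^2-\lambda\|\b v-\b u\|_{0,\epsilon}^2$ and observe that all cross terms involving $\b u$ cancel in pairs: the $-\lambda(\epsilon\b v,\b u)-\lambda(\epsilon\b u,\b v)+\lambda(\epsilon\b u,\b u)$ coming from the first expansion are exactly cancelled by the matching contributions $+\lambda(\epsilon\b v,\b u)+\lambda(\epsilon\b u,\b v)-\lambda(\epsilon\b u,\b u)$ produced when $\lambda$ multiplies the second expansion. What remains is simply $a(\b v,\b v)-\lambda(\epsilon\b v,\b v)$. Dividing through by $\|\b v\|_{0,\epsilon}^2$ gives $R(\b v)-\lambda$ on the right and the claimed expression on the left.

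There is essentially no obstacle here; the only points requiring a note are (i) that the eigenvalues are real so that complex conjugation does not alter $\lambda$ when passing from $a(\b u,\b v)$ to $a(\b v,\b u)$, and (ii) that one may test the eigenvalue equation with the arbitrary $\b v\in\c0$ from the hypothesis, which uses the equivalence of (\ref{2.5}) and (\ref{s5}) at nonzero eigenvalues pointed out right after (\ref{s5}). The proof is therefore a short, self-contained algebraic identity and does not rely on any of the discretization machinery developed in Section~2.2.
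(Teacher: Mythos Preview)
Your proposal is correct: this is exactly the standard algebraic expansion proving the Rayleigh quotient identity. The paper itself does not write out a proof but simply cites \cite{babuska}, p.~699, where precisely this computation appears, so your argument is the same approach made explicit.
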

\begin{proof}
See pp.699 of \cite{babuska}.
\end{proof}

 The basic relation in Lemma 3.2 cannot be directly applied to our theoretical analysis, so in the following  we shall
further simplify   the estimate (\ref{s2.12}).
 Let $\overline{C}= (\frac{\gamma}{\beta})^{1/2}$ then according to the definition of $A(\cdot,\cdot)$,
\begin{eqnarray*}
\|\b v\|_{0,\epsilon}\leq \overline{C}^{-1}\|\b v\|_{A},~~~\forall \b v\in \c0.
\end{eqnarray*}
If $\b u\in M(\lambda)$, $\b v_h\in \b V_h$, $\|\b v_h\|_{A}=1$ and
$\|\b v_h-\b u\|_{A}\leq  \overline{C}(4\sqrt{\wt\lambda})^{-1}$, then by Lemma 3.1 we deduce
\begin{eqnarray*}
&&\|\b v_h-\frac{\b u}{\|\b u\|_{A}}\|_{A}\leq 2\|\b v_h-\b u\|_{A}\leq
 \overline{C}(2\sqrt{\wt\lambda})^{-1},\nonumber\\
&&\|\b v_h-\frac{\b u}{\|\b u\|_{A}}\|_{0,\epsilon}\leq
\overline{C}^{-1}\|\b v_h-\frac{\b u}{\|\b u\|_{A}}\|_{A}\leq (2\sqrt{\wt\lambda})^{-1},
\end{eqnarray*}
which together with
$\|\b u\|_{A}=\sqrt{\wt\lambda}\| \b u \|_{0,\epsilon}$  yields
\begin{eqnarray*}
\|\b v_h\|_{0,\epsilon}\geq
 \frac{\|\b u\|_{0,\epsilon}}{\|\b u\|_A}-\|\b v_h-\frac{\b u}{\|\b u\|_{A}}\|_{0,\epsilon}
 \geq
(2\sqrt{\wt\lambda})^{-1}.
\end{eqnarray*}
Hence, from Lemma \ref{l2} we get the following estimate
\begin{eqnarray}\label{3.6}
|R(\b v_h)-\lambda|\leq C_{4}\|\b v_h-\b u\|_{A}^{2},
\end{eqnarray}
where 
$C_{4}=4\wt\lambda(1+\lambda\overline{C}^{-2}).$
Define the    operators $\wh T: \l2\rt\c0$ and $\widehat{T}_h:\l2
\rightarrow \mathbf{V}_h$  as
\begin{eqnarray}\label{3.4}
&&A(\widehat{T}\b f,\b v)=  (\epsilon\b f,\b v),~\forall \b v\in \c0,\\\label{3.5}
&&A(\widehat{T}_h\mathbf{f},\mathbf{v}_h)=   (\epsilon\mathbf{f},\mathbf{v}_h),~\forall \mathbf{v}_h\in \mathbf{V}_h.
\end{eqnarray}
The following lemma   turns our attention from the spectrum  of $T$ and $T_h$
to that of $\wh T$  and $\wh T_h$.
 \begin{lemma} $T$, $\wt T$ and $\wh T$ share the    eigenvalues   greater than $\frac{\gamma}{\beta}$
 and the associated eigenfunctions.
 The same conclusion is valid for   $T_h$, $\wt T_h$ and $\wh T_h$.
 Moreover, $\wh T|_{\x}=\wt T|_{\x} =T|_{\x}$ and
 $\wh T_{h}|_{\x}=\wt T_{h}|_{\x}  =T_{h}|_{\x}$.
  \end{lemma}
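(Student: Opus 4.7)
My strategy is to first prove the restriction identities $\wh T|_{\x}=\wt T|_{\x}=T|_{\x}$ (and their discrete analogues), then use these together with a test-against-a-gradient trick to match eigenpairs between the three operators. The only place where the three operators genuinely disagree is the branch $\wt\lambda=\gamma/\beta$ of $\wh T$, whose eigenspace is $\nabla H^1_0(\Omega)$ (a direct substitution gives $\wh T(\nabla p)=\frac{\beta}{\gamma}\nabla p$), and the hypothesis $\wt\lambda>\gamma/\beta$ is precisely what excludes this spurious branch.

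For the restriction identities, fix $\b f\in\x$. To handle $\wh T$, test the defining equation (\ref{3.4}) with $\b v=\nabla p$ for $p\in H^1_0(\Omega)$: the term $a(\wh T\b f,\nabla p)$ vanishes because $\mathrm{curl}\nabla p=0$, and $(\epsilon\b f,\nabla p)=0$ because $\b f\in\x$, so $(\epsilon\wh T\b f,\nabla p)=0$ for every $p$. This gives $\wh T\b f\in\x$, and restricting (\ref{3.4}) to test functions $\b v\in\x$ and comparing with (\ref{s2.7}), the $A$-coercive Lax--Milgram theorem forces $\wh T\b f=T\b f$. To handle $\wt T$, note that (\ref{s2.4}) already gives $\wt T\b f\in\x$; testing (\ref{s2.3}) with $\b v=\nabla p$ and using the same two cancellations yields $(\epsilon\nabla p,\nabla S\b f)=0$ for all $p$, whence $S\b f=0$. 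Then (\ref{s2.3}) reduces to the equation defining $\wh T\b f$, so $\wt T\b f=\wh T\b f=T\b f$. The discrete identities follow verbatim, substituting $\nabla U_h\subset\b V_h$ for $\nabla H^1_0(\Omega)\subset\c0$ and invoking the discrete Helmholtz decomposition $\b V_h=\nabla U_h\oplus\x_h$ already recorded in the paper.

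For the eigenvalue correspondence, any eigenpair $(\wt\lambda^{-1},\b u)$ of $T$ has $\b u\in R(T)\subset\x$, so the restriction identities immediately promote it to an eigenpair of both $\wt T$ and $\wh T$ with the same eigenvalue. Conversely, suppose $\wh T\b u=\wt\lambda^{-1}\b u$ with $\wt\lambda>\gamma/\beta$. Testing $A(\wt\lambda^{-1}\b u,\b v)=(\epsilon\b u,\b v)$ against $\b v=\nabla p$ and using $a(\b u,\nabla p)=0$ yields
\[
\left(1-\wt\lambda^{-1}\frac{\gamma}{\beta}\right)(\epsilon\b u,\nabla p)=0\qquad\forall p\in H^1_0(\Omega).
\]
Since $\wt\lambda>\gamma/\beta$ the scalar prefactor is nonzero, so $\b u\in\x$, and the restriction identities return $T\b u=\wt T\b u=\wt\lambda^{-1}\b u$. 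The same argument treats the discrete triple $T_h,\wt T_h,\wh T_h$ line for line. The main delicate point throughout is the $\wt T$ half of the restriction identity, where one must extract $S\b f=0$ from the saddle-point system; everything else is a routine calculation once the gradient-test trick is in hand.
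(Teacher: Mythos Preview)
Your proof is correct and follows essentially the same route as the paper's: both rely on the Helmholtz decomposition, the gradient-test trick (showing that testing against $\nabla p$ forces the divergence-free constraint once $\wt\lambda>\gamma/\beta$), and uniqueness from the $A$-coercivity. The only cosmetic difference is direction: you show $\wh T\b f\in\x$ and then restrict to $\x$, whereas the paper shows $T\b f$ already satisfies the full equation on $\c0$; and you re-derive $S\b f=0$ explicitly, while the paper simply cites the fact (already recorded after (\ref{s2.4})) that $T$ and $\wt T$ agree on $\x$.
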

\begin{proof} The assertions  regarding the relations among $T$, $\wt T$, $T_h$ and $\wt T_h$ have been described in section 2.
Next we shall prove the relations between $T$ and  $\wt T$ and between $T_h$ and $\wt T_h$. By the definition of $T$ and
$\wh T$, the eigenpair ($\wt\lambda,\b u$) of $T$ satisfies
$A(\mathbf{u},\mathbf{v})=   \wt\lambda(\epsilon\mathbf{u},\mathbf{v})~for~all~\mathbf{v}\in \mathbf{X}$
and the eigenpair ($\wt\lambda,\b u$) of $\wt T$ satisfies
$A(\mathbf{u},\mathbf{v})=   \wt\lambda(\epsilon\mathbf{u},\mathbf{v})~for~all~\mathbf{v}\in \c0$.
Note that the above two weak forms are equivalent when $\wt\lambda>\frac{\gamma}{\beta}$ (since this implies
the eigenfunction $\b u$ of the latter satisfies   divergence-free constraint).
Hence $T$ and $\wh T$ share the    eigenvalues  $\wt\lambda>\frac{\gamma}{\beta}$
 and the associated eigenfunctions. Similarly one can check $T_h$ and $\wh T_h$ share the    eigenvalues
 $\wt\lambda_h>\frac{\gamma}{\beta}$
 and the associated eigenfunctions.
 Thanks to Helmholtz decomposition $\c0=\nabla {H}_0^1(\Omega)\bigoplus \mathbf X$ and (\ref{s2.7}), we also have for any $\b f\in\x$
\begin{eqnarray*}
A(T\mathbf{f},\mathbf{v})=(\epsilon\mathbf{f},\mathbf{v}),~\forall \mathbf{v}\in \c0.
\end{eqnarray*}
This together with (\ref{3.4}) yields $T|_\x=\wh T|_\x$.
 Thanks to discrete Helmholtz decomposition $\mathbf{V}_h=\nabla U_h\bigoplus \mathbf X_h$ and (\ref{s2.5}), we also have for any $\b f\in\x_h+\x$
\begin{eqnarray*}
A(T_h\mathbf{f},\mathbf{v})=   (\epsilon\mathbf{f},\mathbf{v}),~\forall \mathbf{v}\in \mathbf{V}_h.
\end{eqnarray*}
This together with (\ref{3.5}) yields $T_h|_\x=\wh T_h|_\x$.
\end{proof}

Denote $dist(\mathbf{w},W)=\inf\limits_{\mathbf{v}\in W}\|\mathbf{w}-\mathbf{v}\|_A$.
 For  better understanding of notations, hereafter
 we write $\nu_k=\wt\lambda^{-1},\nu_{j,h}=\wt\lambda_{j,h}^{-1}$, and
 $M_h(\nu_k)=M_h(\lambda_k)$.\\
\indent The following lemma  (see \cite{yang2}) is valid since $T_h$ and $\wh T_h$ share the same eigenpairs.
It provides  a crucial   tool for  analyzing the error of multigrid Schemes 3.1 and 3.2.

\begin{lemma}\label{l4}
Let $(\nu_0,\mathbf{u}_0)$ is an approximate eigenpair of $(\nu_k,\mathbf{u}_k)$, where $\nu_0$ is not an eigenvalue of $\wh T_h$ and  $\mathbf{u}_0\in\b V_h$ with $\|\mathbf{u}_0\|_a=1$. Suppose that
\begin{eqnarray*}
&&dist(\mathbf{u}_0,M_h(\nu_k))\leq1/2, \\
&&|\nu_0-\nu_k|\leq \rho/4 , ~ |\nu_{j,h}-\nu_{j}|\leq\rho/4(j= k-1,k,k+q,j\neq0),
\end{eqnarray*}
where $\rho=\min_{\nu_j\neq\nu_k}|\nu_j-\nu_k|$.
Let $\mathbf{u}^s\in \b V_h,\mathbf{u}_k^h\in \b V_h$ satisfy
\begin{eqnarray*}
(\nu_0-\wh T_h)\mathbf{u}^s=\mathbf{u}_0,~~~\mathbf{u}_k^h=\mathbf{u}^s/\|\mathbf{u}^s\|_a.
\end{eqnarray*}
Then
\begin{eqnarray*}
dist(\mathbf{u}_k^h, {M}_h(\nu_k))\leq \frac{4}{\rho}\max_{k\le j\le k+q-1}|\nu_0-\nu_{j,h}|dist(\mathbf{u}_0,M_h(\nu_k)).
\end{eqnarray*}
\end{lemma}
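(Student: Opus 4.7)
The plan is to diagonalize $\wh T_h$ on $\b V_h$ and track how the shifted inverse $(\nu_0-\wh T_h)^{-1}$ amplifies the $M_h(\nu_k)$--component of $\mathbf u_0$ relative to its $A$--orthogonal complement. From the defining relation \eqref{3.5}, $A(\wh T_h\mathbf u,\mathbf v)=(\epsilon\mathbf u,\mathbf v)=\overline{A(\wh T_h\mathbf v,\mathbf u)}$, so $\wh T_h$ is Hermitian with respect to $A(\cdot,\cdot)$ on the finite--dimensional space $\b V_h$, and there exists an $A$--orthonormal eigenbasis $\{\mathbf v_{j,h}\}$ with $\wh T_h\mathbf v_{j,h}=\nu_{j,h}\mathbf v_{j,h}$.

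First I would split $\mathbf u_0=\mathbf w_0+\mathbf r_0$ along $\b V_h=M_h(\nu_k)\oplus M_h(\nu_k)^{\perp_A}$, so that $\|\mathbf r_0\|_A=dist(\mathbf u_0,M_h(\nu_k))\le 1/2$. The three smallness hypotheses combine via the triangle inequality to yield the uniform spectral gap $|\nu_0-\nu_{j,h}|\ge\rho-\rho/4-\rho/4=\rho/2$ for every index $j$ outside the cluster $\{k,\ldots,k+q-1\}$. Combining $\|\mathbf u_0\|_a=1$ with $\|\cdot\|_a\le\|\cdot\|_A$ and the Pythagorean identity $\|\mathbf w_0\|_A^2=\|\mathbf u_0\|_A^2-\|\mathbf r_0\|_A^2$ forces $\|\mathbf w_0\|_A\ge\sqrt{3}/2$.

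Expanding $\mathbf u^s=(\nu_0-\wh T_h)^{-1}\mathbf u_0=\sum_j\alpha_j(\nu_0-\nu_{j,h})^{-1}\mathbf v_{j,h}$ and decomposing $\mathbf u^s=\mathbf w^s+\mathbf r^s$ accordingly, Parseval in the $A$--inner product gives the two key bounds
\[ \|\mathbf r^s\|_A\le\tfrac{2}{\rho}\|\mathbf r_0\|_A,\qquad \|\mathbf w^s\|_A\ge\frac{\|\mathbf w_0\|_A}{\max_{k\le j\le k+q-1}|\nu_0-\nu_{j,h}|}. \]
Because $\nu_k$ corresponds to the nonzero physical eigenvalue $\lambda_k$, the preceding lemma places $M_h(\nu_k)\subset\x_h$, on which $\wh T_h=T_h$ and on which $\|\cdot\|_a$ is uniformly equivalent to $\|\cdot\|_A$ with constants controlled by $\wt\lambda_k>\gamma/\beta$. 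This equivalence transports the lower bound from $\|\mathbf w^s\|_A$ to $\|\mathbf u^s\|_a$. The identity $dist(\mathbf u_k^h,M_h(\nu_k))=\|\mathbf r^s\|_A/\|\mathbf u^s\|_a$ (since $\mathbf u_k^h$ is a positive scaling of $\mathbf u^s$) then delivers an inequality of the form $\frac{C}{\rho}\max_{k\le j\le k+q-1}|\nu_0-\nu_{j,h}|\cdot dist(\mathbf u_0,M_h(\nu_k))$, and the lower bound $\|\mathbf w_0\|_A\ge\sqrt{3}/2$ together with a careful tally of the constants reduces $C$ to $4$.

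The delicate point is the bookkeeping between the $a$--norm, in which the iterate is normalized by the scheme, and the $A$--norm, which is natural for the spectral decomposition of $\wh T_h$. This is exactly where the hypothesis $\lambda_k\ne 0$ is essential: it places the cluster $M_h(\nu_k)$ inside $\x_h$, where the two norms are comparable with $h$--independent constants, thereby preventing the curl part of $\mathbf u^s$ from being overwhelmed by a near--gradient contribution and ensuring the lower bound on $\|\mathbf u^s\|_a$ that must match the upper bound on $\|\mathbf r^s\|_A$.
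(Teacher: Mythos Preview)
The paper does not supply its own proof of this lemma; it simply cites \cite{yang2} and remarks that the result carries over to the present setting because $T_h$ and $\wh T_h$ share the same nonzero eigenpairs. Your spectral--decomposition argument (diagonalize the $A$--selfadjoint operator $\wh T_h$, split $\mathbf u_0$ and $\mathbf u^s$ along $M_h(\nu_k)$ and its $A$--orthogonal complement, use the gap $|\nu_0-\nu_{j,h}|\ge\rho/2$ outside the cluster) is the standard one and is essentially the proof in the cited reference.

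There is, however, a genuine gap in your last step. The equivalence between $\|\cdot\|_a$ and $\|\cdot\|_A$ on $M_h(\nu_k)$ is \emph{not} controlled by a universal constant: for an $A$--normalized eigenvector one has $\|\mathbf v_{j,h}\|_a^2/\|\mathbf v_{j,h}\|_A^2=\lambda_{j,h}/\wt\lambda_{j,h}$, so transporting the lower bound from $\|\mathbf w^s\|_A$ to $\|\mathbf w^s\|_a$ costs a factor $\sqrt{\lambda_{k,h}/\wt\lambda_{k,h}}$. The resulting constant is $4\big/\bigl(\sqrt3\,\sqrt{\lambda_{k,h}/\wt\lambda_{k,h}}\bigr)$, which exceeds $4$ as soon as $\lambda_{k,h}<\gamma/(2\beta)$. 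Thus ``a careful tally of the constants reduces $C$ to $4$'' is not justified as written. (A second point you pass over---that $\|\mathbf u^s\|_a\ge\|\mathbf w^s\|_a$---does in fact hold, because $A$--orthogonal eigenvectors of $\wh T_h$ are automatically $(\epsilon\cdot,\cdot)$--orthogonal and hence $a$--orthogonal; but it deserves a line.)

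The way out is that the lemma as printed almost certainly carries a typo in the normalizations. In the only place it is used (the proof of Theorem~3.5) the author normalizes both $\mathbf u_0$ and $\mathbf u^s$ in $\|\cdot\|_A$, not in $\|\cdot\|_a$: see the definition of $\mathbf u_0$ there and the line $\mathbf{\widehat u}^{h_l}_k=\mathbf u^s/\|\mathbf u^s\|_A$. With that reading the $a$--versus--$A$ bookkeeping evaporates, $\|\mathbf u^s\|_A\ge\|\mathbf w^s\|_A$ by Pythagoras, and your argument yields the constant $4/\sqrt3<4$ directly, matching the cited lemma.
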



Let $\delta_0$ and $\delta'_0$ be two positive constants such that

\begin{eqnarray}
&&\delta_0\le \min\{\overline{C}(4\sqrt{\wt\lambda_k})^{-1},\frac1 2\}, ~\delta'_0\le \frac{\lambda_k}{2}, ~
\frac{\delta'_0}{(\wt\lambda_k- \delta'_0 )\wt\lambda_k}\le\frac{\rho}{4},\label{3.7}\\
&&\4\3\delta_0^2<\wt\lambda_j,~\frac{  \4\3^2\delta_0^2   }{(\wt\lambda_j-\4\3\delta_0^2 )\wt\lambda_j}\le\frac{\rho}{4},~
j=k-1,k,k+q,j\neq0,\label{3.8}\\
&& (3+\3) \delta_0+
3\overline{C}^{-2}\4\delta_0^2 +\overline{C}^{-2}(3\lambda_k+2\overline{C}^2) \delta_0\le 1/2.\label{3.9}
\end{eqnarray}

In the coming theoretical analysis, in the step 3 of Scheme 3.1 and step 6 of Scheme  3.2 we introduce a new
auxiliary  variable $\mathbf{\widehat{u}}^{h_i}$ satisfying
$$\mathbf{\widehat{u}}^{h_i}=\frac{\mathbf{u}'}{\|\mathbf{u}'\|_A}.$$
Then it is clear that $\mathbf{u}^{h_i}=\frac{\mathbf{\widehat{u}}^{h_i}}{\|\mathbf{\widehat{u}}^{h_i}\|_a}$
and $\lambda^{h_i}=\frac{a(\mathbf{\wh u}^{h_{i}},\mathbf{\wh u}^{h_{i}})}{(\epsilon\mathbf{\wh u}^{h_{i}},\mathbf{\wh u}^{h_{i}})}.$

\b{Condition 3.1.} There exists $\overline{\b u}_k   \in M(\lambda_k)$ 
such that for some   $i\in\{1,2,\cdots,l\}$
\begin{eqnarray*}
&&\|\b{\widehat{u}}_k^{h_{l-1}}-\b{\overline{u}_k}\|_{A}\le \delta_0, ~\delta_{h_l} (\lambda_j ) \le \delta_0 (j = k - 1, k,k+q, j \neq 0),\\
&&|\lambda^{h_{i}}_k-\lambda_k| \le \delta'_0,
\end{eqnarray*}
where $\lambda^{h_{i}}_k$ and ($\lambda^{h_{l-1}}_k$,$\b{\widehat{u}}_k^{h_{l-1}}$)
 are   approximate eigenpairs corresponding to the eigenvalue $\lambda_k$
obtained by Scheme 3.1 or Scheme 3.2.

We are in a position to prove a critical theorem which establishes the error relation for approximate eigenpairs between
two adjacent iterations.
Our proof
shall   sufficiently make use of the relationship among the operators  $T$, $T_{h_l}$, $\wh T$, $\wh T_{h_l}$, $\wt T$ and
 $\wt T_{h_l}$, as
shown in Lemma 3.3, and   the proof method is an extension of that   in \cite{yang2}.

\begin{theorem}\label{l3}
Let $(\lambda_k^{h_l},\b{\widehat{u}}_k^{h_l})$ be an approximate eigenpair obtained by Scheme 3.1
or Scheme 3.2. Suppose Theorem 2.2 holds with  $\lambda=\lambda_{k-1},\lambda_{k},\lambda_{k+q}$,  and  Condition 3.1   holds with $i=l-1$ for Scheme 3.1 or with $i=i0,l-1$ for
Scheme 3.2. Let $\lambda_0=\lambda_k^{h_{l-1}}$ for
Scheme 3.1 or $\lambda_0=\lambda_k^{h_{i0}}$ for Scheme 3.2.   
Then there  exists $\mathbf{u}_k\in M(\lambda_k)$ such that
\begin{eqnarray}\label{s3.4}
  \|\mathbf{\widehat{u}}^{h_{l}}_k-\mathbf{u}_k\|_{A} \leq \frac{\0}{2}\Big(|\lambda_0-\lambda_k|(|\lambda^{h_{l-1}}_k-
  \lambda_k|+\|\mathbf{\widehat{u}}^{h_{l-1}}_k-\overline{\mathbf{u}}_k\|_{0,\epsilon})
  +\delta_{h_{l}}(\lambda_k)\Big),
\end{eqnarray}
where  $\0$ is independent of the mesh parameters  and the iterative times $l$.
\end{theorem}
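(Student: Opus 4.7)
The plan is to recast Step~3 of Scheme~3.1 (or Step~6 of Scheme~3.2) as an inverse iteration for the auxiliary operator $\wh T_{h_l}$ defined in~(\ref{3.5}), apply Lemma~\ref{l4}, and then convert the discrete output back to a continuous eigenfunction via~(\ref{2.24}) of Remark~2.1. Starting from $a(\mathbf{u}',\mathbf{v})-\lambda_0(\epsilon\mathbf{u}',\mathbf{v})=(\epsilon\mathbf{u}^{h_{l-1}},\mathbf{v})$ I would add $\tfrac{\gamma}{\beta}(\epsilon\mathbf{u}',\mathbf{v})$ to both sides to rewrite it as $A(\mathbf{u}',\mathbf{v})-\wt\lambda_0(\epsilon\mathbf{u}',\mathbf{v})=(\epsilon\mathbf{u}^{h_{l-1}},\mathbf{v})$ and then use the definition of $\wh T_{h_l}$ to arrive at $(\wt\lambda_0^{-1}I-\wh T_{h_l})\mathbf{u}'=\wt\lambda_0^{-1}\wh T_{h_l}\mathbf{u}^{h_{l-1}}$. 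A scalar rescaling puts this into the Lemma~\ref{l4} template $(\nu_0-\wh T_{h_l})\mathbf{u}^s=\mathbf{u}_0$ with $\nu_0=\wt\lambda_0^{-1}$, $\mathbf{u}_0=\wh T_{h_l}\mathbf{u}^{h_{l-1}}/\|\wh T_{h_l}\mathbf{u}^{h_{l-1}}\|_a$, and $\mathbf{u}^{h_l}=\mathbf{u}^s/\|\mathbf{u}^s\|_a$ the iterate produced by the scheme.

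Next I would verify Lemma~\ref{l4}'s smallness hypotheses. The separations $|\nu_0-\nu_k|\le\rho/4$ and $|\nu_{j,h_l}-\nu_j|\le\rho/4$, $j=k-1,k,k+q$, $j\ne 0$, are the content of~(\ref{3.7})-(\ref{3.8}) once Condition~3.1 is combined with the quadratic eigenvalue estimate $|\lambda_{j,h_l}-\lambda_j|\le C_4 C_3^2\delta_{h_l}^2(\lambda_j)$ produced by Theorem~2.3 via the Rayleigh quotient bound~(\ref{3.6}). For the inequality $dist(\mathbf{u}_0,M_{h_l}(\nu_k))\le 1/2$ I would introduce the discrete eigenfunction $\mathbf{u}^\dagger_{h_l}\in M_{h_l}(\lambda_k)$ obtained from~(\ref{2.23}) applied to $\overline{\mathbf u}_k$, and decompose
\[
\wh T_{h_l}\mathbf{u}^{h_{l-1}}-\nu_k\mathbf{u}^\dagger_{h_l}=\wh T_{h_l}(\mathbf{u}^{h_{l-1}}-c\,\overline{\mathbf u}_k)+c(\wh T_{h_l}-T)\overline{\mathbf u}_k+c\nu_k(\overline{\mathbf u}_k-\mathbf{u}^\dagger_{h_l}),
\]
where $c$ is the scalar making $c\,\overline{\mathbf u}_k$ the best approximation of $\mathbf{u}^{h_{l-1}}$. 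Because $\overline{\mathbf u}_k\in\mathbf X$, Lemma~3.3 gives $\wh T_{h_l}\overline{\mathbf u}_k=T_{h_l}\overline{\mathbf u}_k$ and the middle term is controlled by~(\ref{s2.18}); the outer terms are controlled by Condition~3.1 and (\ref{15}). After normalization and using Lemma~\ref{l2}/(\ref{3.6}) to convert a residual $\|\mathbf{\widehat u}^{h_{l-1}}_k-\overline{\mathbf u}_k\|_A^2$ into $|\lambda^{h_{l-1}}_k-\lambda_k|$ where needed, this yields $dist(\mathbf{u}_0,M_{h_l}(\nu_k))\lesssim \|\mathbf{\widehat u}^{h_{l-1}}_k-\overline{\mathbf u}_k\|_{0,\epsilon}+|\lambda^{h_{l-1}}_k-\lambda_k|+\delta_{h_l}(\lambda_k)$, which lies below $1/2$ by~(\ref{3.9}).

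With Lemma~\ref{l4} now applicable, I obtain $\mathbf{u}^*_{h_l}\in M_{h_l}(\nu_k)$ satisfying $\|\mathbf{u}^{h_l}-\mathbf{u}^*_{h_l}\|_A\le\tfrac{4}{\rho}\max_j|\nu_0-\nu_{j,h_l}|\,dist(\mathbf{u}_0,M_{h_l}(\nu_k))$. Writing $|\nu_0-\nu_{j,h_l}|\lesssim|\lambda_0-\lambda_k|+|\lambda_k-\lambda_{j,h_l}|$ splits the right-hand side: the first part reproduces the term $|\lambda_0-\lambda_k|(|\lambda^{h_{l-1}}_k-\lambda_k|+\|\mathbf{\widehat u}^{h_{l-1}}_k-\overline{\mathbf u}_k\|_{0,\epsilon})$ of~(\ref{s3.4}) plus an $O(|\lambda_0-\lambda_k|\delta_{h_l})$ remainder, while the second part is $O(\delta_{h_l}^2)$ which, combined with $dist\le 1/2$, is again $O(\delta_{h_l})$. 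Converting $\mathbf{u}^{h_l}$ (normalized in $\|\cdot\|_a$) into $\mathbf{\widehat u}^{h_l}$ (normalized in $\|\cdot\|_A$) via Lemma~\ref{l1}, and replacing $\mathbf{u}^*_{h_l}$ by a continuous eigenfunction $\mathbf{u}_k\in M(\lambda_k)$ via~(\ref{2.24}), introduces one further $C_3\delta_{h_l}(\lambda_k)$ error that is absorbed into the standalone $\delta_{h_l}(\lambda_k)$ summand of~(\ref{s3.4}).

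The main obstacle will be producing the bound on $dist(\mathbf{u}_0,M_{h_l}(\nu_k))$ in just the right shape: its leading contribution must be $|\lambda^{h_{l-1}}_k-\lambda_k|+\|\mathbf{\widehat u}^{h_{l-1}}_k-\overline{\mathbf u}_k\|_{0,\epsilon}$ so that multiplication by $|\lambda_0-\lambda_k|$ reproduces the first group in~(\ref{s3.4}), while every $\delta_{h_l}$-type or higher-order contamination must be channelled into the additive $\delta_{h_l}(\lambda_k)$ remainder. Maintaining this clean structure while keeping $C_0$ independent of the iteration index $l$ is precisely why the a priori assumptions (\ref{3.7})-(\ref{3.9}) are needed: they guarantee that the denominator $\|\wh T_{h_l}\mathbf{u}^{h_{l-1}}\|_a$ defining $\mathbf{u}_0$, the ratio $\|\mathbf{u}'\|_a/\|\mathbf{u}'\|_A$ linking $\mathbf{u}^{h_l}$ with $\mathbf{\widehat u}^{h_l}$, and every intermediate normalization constant remain uniformly bounded away from $0$ and $\infty$ at each iteration level.
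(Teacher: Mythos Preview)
Your proposal is correct and follows essentially the same route as the paper: rewrite the shifted system in terms of $\wh T_{h_l}$, verify the hypotheses of Lemma~\ref{l4} using Condition~3.1 together with (\ref{3.7})--(\ref{3.9}), apply Lemma~\ref{l4}, and then pass from $M_{h_l}(\lambda_k)$ back to $M(\lambda_k)$ via Theorem~2.3. The only noteworthy bookkeeping difference is that the paper normalizes $\mathbf u_0$ and $\mathbf u^s$ in $\|\cdot\|_A$ throughout (so that Lemma~\ref{l4} directly yields a bound on $\mathbf{\widehat u}^{h_l}_k$ rather than on $\mathbf u^{h_l}_k$), which spares the final $\|\cdot\|_a\leftrightarrow\|\cdot\|_A$ renormalization you propose; and for the last step the paper expands the projection $\mathbf u^*\in M_{h_l}(\lambda_k)$ in an $A$-orthonormal basis $\{\mathbf u_{j,h_l}\}_{j=k}^{k+q-1}$ and applies (\ref{2.22}) componentwise, whereas you invoke (\ref{2.24})---note that (\ref{2.24}) is stated for a single discrete eigenfunction, so in the multiple-eigenvalue case you will still need the basis expansion to make that step rigorous.
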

 \begin{proof}
 Step 3 of Scheme 3.1 with $i = l$  is equivalent to: find $(\mathbf{u}_{h_l}',\sigma')\in U_{h_l}\times V_{h_l}$ such that
 \begin{eqnarray}\label{s3.20}
&&A(\mathbf{u}',\mathbf{v})-(\lambda_{0}+\overline{C}^2)A(\wh T_{h_l}\mathbf{u}',\mathbf{v})=A(\wh T_{h_l}\mathbf{u}^{h_{l-1}}_k,\mathbf{v}),~~~\forall\mathbf{v}\in \b V_{h_l},
\end{eqnarray}
and  $\mathbf{u}^{h_l}_k=\mathbf{u}'/\|\mathbf{u}'\|_a,~\mathbf{\wh u}^{h_l}_k=\mathbf{u}'/\|\mathbf{u}'\|_A$.
That is
\begin{eqnarray}\label{s3.22}\nonumber
((\lambda_{0}+\overline{C}^2)^{-1}-\wh T_{h_l})\mathbf{u'}=(\lambda_{0}+\overline{C}^2)^{-1}\wh T_{h_l}\mathbf{u}^{h_{l-1}}_k,~~~\mathbf{\b{\widehat{u}}}^{h_l}_k=\mathbf{u}'/\|\mathbf{u}'\|_A.
\end{eqnarray}
Denote
\begin{eqnarray}\nonumber
&&\nu_0=(\lambda_{0}+\overline{C}^2)^{-1},~~~\mathbf{u}_0=(\lambda^{h_{l-1}}_k+\overline{C}^2)\wh T_{h_{l}}\mathbf{\widehat{u}}^{h_{l-1}}_k
/\|(\lambda^{h_{l-1}}_k+\overline{C}^2)\wh T_{h_{l}}\mathbf{\widehat{u}}^{h_{l-1}}_k\|_{A},\\\nonumber
&& \mathbf{u}^s=(\lambda_{0}+\overline{C}^2)\mathbf{u}'/\|(\lambda^{h_{l-1}}_k+\overline{C}^2)\wh T_{h_{l}}\mathbf{u}^{h_{l-1}}_k\|_{A}, ~~~ \nu_{h_l}=1/\lambda^{h_{l }}_k .
\end{eqnarray}
Noting $\mathbf{u}_k^{h_{l-1}}=\mathbf{\widehat{u}}^{h_{l-1}}_k/\|\mathbf{\widehat{u}}^{h_{l-1}}_k\|_a$, then Step 3 of Scheme 3.1   is equivalent to:
\begin{eqnarray*}
(\nu_0-\wh T_{h_l})\mathbf{u}^s=\mathbf{u}_0,~~~\mathbf{\widehat{u}}^{h_l}_k=\mathbf{u}^s/\|\mathbf{u}^s\|_A.
\end{eqnarray*}
Noting $\|\overline{\mathbf{u}}_k\|_A\le1+\delta_0\le 3/2$, using Lemma 3.3 we derive from (\ref{3.4}) and (\ref{3.5})
\begin{eqnarray}\label{3.11}
&&\|(\lambda^{h_{l-1}}_k+\overline{C}^2)\wh T_{h_{l}}\mathbf{\overline{u}}_k-\overline{\mathbf{u}}_k\|_A=
  \|(\lambda_k+\overline{C}^2)\wh T\overline{\mathbf{u}}_k-(\lambda^{h_{l-1}}_k+\overline{C}^2)\wh T_{h_{l}}\mathbf{\overline{u}}_k\|_A \nonumber\\
&&~~~\le(\lambda_k+\overline{C}^2)\|(\wh T-\wh T_{h_{l}})\overline{\mathbf{u}}_k\|_A +\|(\lambda_k-\lambda^{h_{l-1}}_k)\wh T_{h_l}\overline{\mathbf{u}}_k\|_A \nonumber\\
&&~~~\le  \delta_{h_l}(\lambda_k)\|\overline{\mathbf{u}}_k\|_{A} +
  \overline{C}^{-1}|\lambda^{h_{l-1}}_k-\lambda_k|\|\overline{\mathbf{u}}_k\|_{0,\epsilon}\nonumber\\
 &&~~~\le  \frac{3}{2}\delta_{h_l}(\lambda_k) +
  \frac{3}{2}\overline{C}^{-2}|\lambda^{h_{l-1}}_k-\lambda_k|.
\end{eqnarray}

By (\ref{s3.1}), (\ref{3.5}) and (\ref{3.7}), we have
\begin{eqnarray}\nonumber
&&\|\mathbf{u}_{0}-\frac{\overline{\mathbf{u}}_k}{\|\overline{\mathbf{u}}_k\|_{A}}\|_{A}
  \leq 2
\|(\lambda^{h_{l-1}}_k+\overline{C}^2)\wh T_{h_{l}}\mathbf{\widehat{u}}^{h_{l-1}}_k-\overline{\mathbf{u}}_k\|_{A}\\\label{3.13}
&&~~~\le2(\|(\lambda^{h_{l-1}}_k+\overline{C}^2)\wh T_{h_{l}}\mathbf{\overline{u}}_k-\overline{\mathbf{u}}_k\|_A+\|(\lambda^{h_{l-1}}_k+\overline{C}^2)\wh T_{h_{l}}(\mathbf{\overline{u}}_k-\mathbf{\widehat{u}}^{h_{l-1}}_k)\|_{A} )\nonumber\\
&&~~~\le2\|(\lambda^{h_{l-1}}_k+\overline{C}^2)\wh T_{h_{l}}\mathbf{\overline{u}}_k-\overline{\mathbf{u}}_k\|_A+\overline{C}^{-1}(3\lambda_k+2\overline{C}^2)\|
\mathbf{\overline{u}}_k-\mathbf{\widehat{u}}^{h_{l-1}}_k\|_{0,\epsilon} .
\end{eqnarray}
We shall verify the conditions of Lemma \ref{l4}.
Recalling (\ref{2.23}), (\ref{3.6}) and (\ref{3.9}), the estimates (\ref{3.11}) and (\ref{3.13}) lead to
\begin{eqnarray}\nonumber
&&dist(\mathbf{u}_{0},{M}_{h_l}(\lambda_k))\leq \|\mathbf{u}_{0}-\frac{\overline{\mathbf{u}}_k}{\|\overline{\mathbf{u}}_k\|_{A}}\|_{A}
+dist(\frac{\overline{\mathbf{u}}_k}{\|\overline{\mathbf{u}}_k\|_{A}}, {M}_{h_l}(\lambda_k) )\\\nonumber
&&~~~\leq (3+\3) \delta_{h_l}(\lambda_k)+
3\overline{C}^{-2}|\lambda_k^{h_{l-1}}-\lambda_k| +\overline{C}^{-1}(3\lambda_k+2\overline{C}^2)\|
\mathbf{\overline{u}}_k-\mathbf{\widehat{u}}^{h_{l-1}}_k\|_{0,\epsilon}\\\nonumber
&&~~~\leq (3+\3) \delta_0+
3\overline{C}^{-2}\4\delta_0^{2} +\overline{C}^{-2}(3\lambda_k+2\overline{C}^2) \delta_0\\\label{3.13s}
&&~~~\le 1/2.
\end{eqnarray}
Due to Condition 3.1 we have from (\ref{3.7})
\begin{eqnarray}\nonumber
&&|\nu_{k}-\nu_0|=\frac{|\lambda_0-\lambda_{k}|}{|(\lambda_{0}+\overline{C}^2)\wt \lambda_{k}|}\le
\frac{\delta'_0}{(\wt\lambda_k- \delta'_0 )\wt\lambda_k}\le\frac{\rho}{4}.
\end{eqnarray}
Since by (\ref{3.6}), (\ref{2.22}) and (\ref{3.8}) we get
\begin{eqnarray}\label{3.14s}
 \wt\lambda_{j,h_l}\ge \wt\lambda_j-|\lambda_j-\lambda_{j,h_l}|
\ge  \wt\lambda_j-\4\3\delta_{h_l}^2(\lambda_j)
\ge  \wt\lambda_j-\4\3\delta_0^2>0
\end{eqnarray}
and then for $ j=k-1,k,k+q,j\neq0$
\begin{eqnarray}\nonumber
&&|\nu_{j}-\nu_{j,h_l}|=\frac{|\lambda_j-\lambda_{j,h_l}|}{|\wt\lambda_j \wt\lambda_{j,h_l}|}
\le \frac{  \4\3^2\delta_0^2   }{(\wt\lambda_j-\4\3\delta_0^2 )\wt\lambda_j}\le\frac{\rho}{4}.
\end{eqnarray}
Therefore the   conditions of Lemma \ref{l4} hold, and we have
\begin{eqnarray}\label{s3.23}
dist(\mathbf{\widehat{u}}_k^{h_l}, {M}_{h_l}(\lambda_k))\leq \frac{4}{\rho}\max_{k\le j\le k+q-1}|\nu_{j,h_{l}}-\nu_0|dist(\mathbf{u}_0,M_{h_l}(\lambda_k)).
\end{eqnarray}

Applying (\ref{3.14s}), (\ref{3.6}) and (\ref{2.22}) we have for $j=k,k+1,\cdots,k+q-1$
\begin{eqnarray}\nonumber
&&|\nu_{j,h_{l}}-\nu_0|=\frac{|\lambda_0-\lambda_{j,h_l}|}{|(\lambda_{0}+\overline{C}^2) \wt\lambda_{j,h_l}|}\le
\frac{|\lambda_0-\lambda_k|+|\lambda_k-\lambda_{j,h_l}|}{(\wt\lambda_k- \delta'_0 ) (\wt\lambda_k-\4\3\delta_0^2)}\\\label{3.15}
&&~~~\le \frac{|\lambda_0-\lambda_k|+\4\3\delta_{h_l}^2(\lambda_k)}{(\wt\lambda_k- \delta'_0 ) (\wt\lambda_k-\4\3\delta_0^2)}.
\end{eqnarray}
 Substituting (\ref{3.13s}) and (\ref{3.15}) into (\ref{s3.23}), we have
\begin{eqnarray}\nonumber
&&dist(\mathbf{\widehat{u}}^{h_l}_k, {M}_{h_l}(\lambda))\leq \frac{4}{\rho}\Big(
\frac{|\lambda_0-\lambda_k|+\4\3\delta_{h_l}^2(\lambda_k)}{|(\wt\lambda_k- \delta'_0 ) (\wt\lambda_k-\4\3\delta_0^2)|}\Big) \times\\\nonumber
&&~~~\big((3+\3) \delta_{h_l}(\lambda_k)+
3\overline{C}^{-2}|\lambda^{h_{l-1}}_k-\lambda_k| +\overline{C}^{-1}(3\lambda_k+2\overline{C}^2)\|
\mathbf{\overline{u}}_k-\mathbf{\widehat{u}}^{h_{l-1}}_k\|_{0,\epsilon}\big).\\\label{3.16}
\end{eqnarray}
Let $ \mathbf{u}_{j,{h}}$ be the eigenfunction corresponding to $\lambda_{j,{h}}$ such that
$\{\mathbf{u}_{j,{h}}\}_{j=k}^{k+q-1} $ constitutes an orthonormal basis of $M_{h}(\lambda)$ in the sense of norm $\|\cdot\|_A$.
Let $ \mathbf{u}^* =  \sum\limits_{j=k}^{k+q-1}A(\mathbf{\widehat{u}}^{h_l}_k,\mathbf{u}_{j,h_l}) \mathbf{u}_{j,h_l}$ then $
 \| \mathbf{\widehat{u}}^{h_{l}}_k-\mathbf{u}^*\|_{A}=  dist(\mathbf{\widehat{u}}^{h_l}_k, {M}_{h_l}(\lambda_k))$.

 From Theorem 2.2, we know there exists $\{\mathbf{u}_j^0\}_k^{k+q-1}\subset M(\lambda_k)$ such that $\mathbf{u}_{j,h_l}-\mathbf{u}_j^0$ satisfies (\ref{2.22})﹛ and it holds
by taking $\mathbf{u}_k =  \sum\limits_{j=k}^{k+q-1}A(\mathbf{\widehat{u}}^{h_l}_k,\mathbf{u}_{j,h_l})\mathbf{u}_j^0$
\begin{eqnarray}\nonumber
\|\mathbf{u}_k-\mathbf{u}^*\|_{A} &=& \|\sum\limits_{j=k}^{k+q-1}A(\mathbf{\widehat{u}}^{h_l}_k,\mathbf{u}_{j,h_l})(\mathbf{u}_j^0-\mathbf{u}_{j,h_l})\|_{A}\\\nonumber
&\leq& (\sum\limits_{j=k}^{k+q-1}\|\mathbf{u}_j^0-\mathbf{u}_{j,h_l}\|^2_{A})^{1/2}\\\label{s3.34}
&\leq&\3\sqrt q  \delta_{h_l}(\lambda_k).
\end{eqnarray}
Therefore,  summing up (\ref{3.16}) and (\ref{s3.34}), we know 
there exists a positive constant $\0\ge\3$ that is independent of mesh parameters
and $l$ such that (\ref{s3.4})  holds.
\end{proof}

\indent {\bf Condition 3.2.}~~For any given $\varepsilon\in (0,2)$, there exist
 $t_{i}\in (1,3-\varepsilon]$ such that $\delta_{h_{i}}(\lambda_{k})=\delta^{t_{i}}_{h_{i-1}}(\lambda_{k})$
 and $\delta_{h_{i}}(\lambda_{k})\to 0~(i\to\infty)$.\\

\indent Condition 3.2 is easily satisfied. For example, for smooth solution, by using
the uniform mesh, let $h_0 =\sqrt{2}/8$, $h_1 =\sqrt{2}/32$, $h_2 =\sqrt{2}/64$ and $h_3=\sqrt{2}/128$, we have $h_i = h_{i-1}^{t_i}$,
i.e., $\delta_{h_i} = \delta_{h_{i-1}} ^{t_i} $, where $t_1\approx 1.80$,
$t_2 \approx 1.22$, $t_3 \approx 1.18$. For non-smooth solution, the condition could be satisfied
when the local refinement is performed near   the singular points.

\begin{theorem}
Let $(\lambda_{k}^{h_{l}}, \b{\widehat{u}}_{k}^{h_{l}})$ be the approximate eigenpairs
obtained by Scheme 3.1. Suppose  Condition
3.2 holds. Then there exist  $\b u_{k}\in M(\lambda_{k})$ and $H_0>0$ such that if $H\le H_0$ then
\begin{eqnarray}\label{s4.15}
&&\|\b{\widehat{u}}_{k}^{h_{l}}-\b u_{k}\|_{A}\leq  C_{0}
\delta_{h_{l}}(\lambda_{k}),\\\label{s4.16}
&&|\lambda_{k}^{h_{l}}-\lambda_{k}|\leq C_{4} C_{0}^{2}
\delta_{h_{l}}^{2}(\lambda_{k}),~~~ l\ge1.
\end{eqnarray}
\end{theorem}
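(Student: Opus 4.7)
The plan is to prove the two estimates simultaneously by induction on $l$, treating the eigenvector estimate (\ref{s4.15}) as the main object and recovering the eigenvalue estimate (\ref{s4.16}) from it through the Rayleigh quotient relation (\ref{3.6}). The recursion supplied by Theorem \ref{l3} gives the error at level $l$ in terms of the error at level $l-1$, and the gap between the two sides of that recursion is exactly the ``cubic in $\delta_{h_{l-1}}$'' improvement that Condition 3.2 is designed to absorb into $\delta_{h_l}$.

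For the base case I would invoke Theorem 2.2 on the coarse space $\b V_H\times U_H$: choosing $H_0$ small enough guarantees that $\b u^{h_0}_k=\b u_H$ satisfies $\|\b{\widehat u}^{h_0}_k-\overline{\b u}_k\|_A\le \3\delta_H(\lambda_k)\le \delta_0$ and, via (\ref{3.6}), that $|\lambda^{h_0}_k-\lambda_k|\le \4\3^2\delta_H^2(\lambda_k)\le\delta'_0$, so Condition 3.1 holds at $i=0$. I would then pick $\0\ge \max\{\3,\3\sqrt{q}\}$ large enough that the final inequality in the inductive step closes, and shrink $H_0$ further so that $\0\delta_{h_l}(\lambda_k)\le\delta_0$ for every $l\ge0$ (by Condition 3.2 the sequence $\delta_{h_l}(\lambda_k)$ decreases monotonically once $\delta_H$ is small).

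For the inductive step, assume $\|\b{\widehat u}^{h_{l-1}}_k-\overline{\b u}_k\|_A\le \0\delta_{h_{l-1}}(\lambda_k)$. Combined with (\ref{3.6}) and the embedding $\|\cdot\|_{0,\epsilon}\le \overline{C}^{-1}\|\cdot\|_A$, this yields
\begin{eqnarray*}
|\lambda^{h_{l-1}}_k-\lambda_k|&\le& \4\0^2\delta_{h_{l-1}}^2(\lambda_k),\\
\|\b{\widehat u}^{h_{l-1}}_k-\overline{\b u}_k\|_{0,\epsilon}&\le& \overline{C}^{-1}\0\delta_{h_{l-1}}(\lambda_k),
\end{eqnarray*}
and one verifies that Condition 3.1 propagates at $i=l-1$ (taking $H_0$ small so that $\0\delta_{h_{l-1}}\le\delta_0$ and $\4\0^2\delta_{h_{l-1}}^2\le\delta'_0$, and that $\delta_{h_l}(\lambda_j)\le\delta_0$ for $j=k-1,k,k+q$). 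Since in Scheme 3.1 the shift satisfies $\lambda_0=\lambda^{h_{l-1}}_k$, Theorem \ref{l3} produces an $\b u_k\in M(\lambda_k)$ with
\begin{eqnarray*}
\|\b{\widehat u}^{h_l}_k-\b u_k\|_A\le \tfrac{\0}{2}\!\left(\4\0^2\delta_{h_{l-1}}^2(\lambda_k)\bigl(\4\0^2\delta_{h_{l-1}}^2(\lambda_k)+\overline{C}^{-1}\0\delta_{h_{l-1}}(\lambda_k)\bigr)+\delta_{h_l}(\lambda_k)\right).
\end{eqnarray*}
The bracket expands into terms of orders $\delta_{h_{l-1}}^3$ and $\delta_{h_{l-1}}^4$. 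By Condition 3.2, $\delta_{h_{l-1}}^3=\delta_{h_{l-1}}^{3-t_l}\delta_{h_l}(\lambda_k)\le \delta_H^{\,\varepsilon}\,\delta_{h_l}(\lambda_k)$, so taking $H_0$ small makes those terms bounded by $\delta_{h_l}(\lambda_k)$, and the whole right-hand side is at most $\0\delta_{h_l}(\lambda_k)$, closing the induction. Estimate (\ref{s4.16}) then follows at once by applying (\ref{3.6}) to $\b{\widehat u}^{h_l}_k$.

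The main obstacle will be the bookkeeping: choosing $\0$ and $H_0$ in the right order so that \emph{all} of Condition 3.1's smallness requirements at level $l-1$, together with the closure inequality $\tfrac{\0}{2}(\text{cubic terms}+\delta_{h_l})\le \0\delta_{h_l}$, are satisfied simultaneously and uniformly in $l$. The essential point that makes this work is that the ``Theorem \ref{l3}'' recursion is already cubic in the previous error (the Rayleigh quotient contributes one factor $\delta_{h_{l-1}}^2$ via $|\lambda_0-\lambda_k|$ and the other factor $\delta_{h_{l-1}}$ comes from $\|\b{\widehat u}^{h_{l-1}}_k-\overline{\b u}_k\|_{0,\epsilon}$), while Condition 3.2 guarantees $t_l<3$ with a positive buffer $\varepsilon$, so $\delta_{h_{l-1}}^3$ is dominated by $\delta_{h_l}$ times a factor that can be forced arbitrarily small by refining the coarse mesh.
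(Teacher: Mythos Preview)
Your proposal is correct and follows essentially the same route as the paper: induction on $l$, with the base case supplied by Theorem~2.2 (via (\ref{2.24}) and (\ref{3.6})) on the coarse mesh, the inductive step driven by Theorem~\ref{l3} with $\lambda_0=\lambda_k^{h_{l-1}}$, and Condition~3.2 used exactly as you describe to convert the cubic/quartic terms $\delta_{h_{l-1}}^{3}$, $\delta_{h_{l-1}}^{4}$ into $\delta_H^{\varepsilon}\delta_{h_l}$, $\delta_H^{1+\varepsilon}\delta_{h_l}$ so that the recursion closes. The paper's only additional explicitness is writing out the precise smallness requirement $C_4^2C_0^4\delta_H^{1+\varepsilon}(\lambda_k)+C_4C_0^3\overline{C}^{-1}\delta_H^{\varepsilon}(\lambda_k)\le 1$ on $H_0$, which is exactly the ``closure inequality'' you identify as the main bookkeeping obstacle.
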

\begin{proof}
The proof is completed by using induction and Theorem 3.5 with
$\lambda_{0}=\lambda_{k}^{h_{l-1}}$.
 Noting that $\delta_H(\lambda_k)\rt 0$ as $H\rt 0$,  there exists $H_0>0$ such that if $H<H_0$ then Theorem 2.2
 holds for $\lambda=\lambda_{k-1},\lambda_{k},\lambda_{k+q}$  and
\begin{eqnarray*}
&&\0\delta_H(\lambda_k)\le \delta_0,~\4\0^2\delta_H^2(\lambda_k)\le \delta'_0,~
\delta_H(\lambda_j)\le \delta_0,(j=k-1,k,k+q,j\ne0),\\
&&C_{4}^{2}C_{0}^{4}\delta_{H}^{1+\varepsilon}(\lambda_{k})+C_{4}C_{0}^{3}\overline{C}^{-1}
\delta_{H}^{\varepsilon}(\lambda_{k})\leq 1.
\end{eqnarray*}
 When $l=1$,
$(\lambda_{k}^{h_{l-1}}, \b{\widehat{u}}_{k}^{h_{l-1}})=(\lambda_{k,H}, \b{\widehat{u}}_{k,H})$,
from (\ref{2.24}) and (\ref{3.6}) we know that
there exists $\overline{\b u}_k\in M(\lambda_{k})$ such that
\begin{eqnarray*}
&&\|\b{\widehat{u}}_{k,H}-\overline{\b u}_k\|_{A}\leq C_{3} \delta_{H}(\lambda_{k}),\\
&&|\lambda_{k,H}-\lambda_{k}|\leq
C_{4}C_{3}^{2}\delta_{H}^{2}(\lambda_{k}).
\end{eqnarray*}
Then $\|\b{\widehat{u}}_{k}^{h_{0}}-\overline{\b u}_k\|_{A}\leq \3\delta_{H}(\lambda_{k})\leq
\delta_{0}$, $|\lambda_{k}^{h_{0}}-\lambda_{k}|\leq
C_{4}\3^{2}\delta_{H}^{2}(\lambda_{k})\leq \delta'_{0}$ and
$\delta_{h_{1}}(\lambda_{j})\leq \delta_{0}$ $(j=k-1,k,k+q,j\ne0)$, i.e.,
Condition 3.1 holds for $l=1$. Thus, by Theorem 3.5 and $C_{3}\leq C_{0}$ we
get
\begin{eqnarray*}
&&\|\b{\widehat{u}}_{k}^{h_{1}}-\b{u}_{k}\|_{A}\leq
\frac{\0}{2}\{C_{4}^{2}C_{0}^{4}\delta_{H}^{4}(\lambda_{k})
+C_{4}C_{0}^{3}\overline{C}^{-1}\delta_{H}^{3}(\lambda_{k})+\delta_{h_{1}}(\lambda_{k})\}\nonumber\\
&&~~~\leq
\frac{\0}{2}\{C_{4}^{2}C_{0}^{4}\delta_{H}^{4-t_{1}}(\lambda_{k})+C_{4}C_{0}^{3}
\overline{C}^{-1}\delta_{H}^{3-t_{1}}(\lambda_{k})+1\} \delta_{h_{1}}(\lambda_{k})\nonumber\\
&&~~~\leq
\frac{\0}{2}\{C_{4}^{2}C_{0}^{4}\delta_{H}^{1+\varepsilon}(\lambda_{k})+C_{4}C_{0}^{3}
\overline{C}^{-1}\delta_{H}^{\varepsilon}(\lambda_{k})+1\} \delta_{h_{1}}(\lambda_{k}),
\end{eqnarray*}
where we have used  the fact  $3-t_{1}\geq \varepsilon$. This yields (\ref{s4.15}) and (\ref{s4.16})
for $l=1$. 
Suppose that Theorem 3.6 holds for $l-1$, i.e.,   there exists
$\overline{\b u}_k\in M(\lambda_{k})$ such that
\begin{eqnarray*}
&&\|\b{\widehat{u}}_{k}^{h_{l-1}}-\overline{\b u}_k\|_{A}\leq  C_{0}\delta_{h_{l-1}}(\lambda_{k}),\\
&&|\lambda_{k}^{h_{l-1}}-\lambda_{k}|\leq C_{4} C_{0}^{2}
\delta_{h_{l-1}}^{2}(\lambda_{k}),
\end{eqnarray*}
then $\|\b{\widehat{u}}_{k}^{h_{l-1}}-\overline{\b u}\|_{A}\leq \delta_{0}$,
$|\lambda_{k}^{h_{l-1}}-\lambda_{k}|\leq \delta'_{0}$
 and $\delta_{h_{l}}(\lambda_{j})\leq \delta_{0}$ $(j=k-1,k,k+q,j\ne0)$,
and the conditions of Theorem 3.5 hold. Therefore, for $l$, by
(\ref{s3.4}) we deduce
\begin{eqnarray*}
&&\|\b{\widehat{u}}_{k}^{h_{l}}-\b u_{k}\|_{A}\leq
\frac{\0}{2}\{C_{4}^{2}C_{0}^{4}\delta_{h_{l-1}}^{4}(\lambda_{k})
+C_{4}C_{0}^{3}\overline{C}^{-1}\delta_{h_{l-1}}^{3}(\lambda_{k})
+\delta_{h_{l}}(\lambda_{k})\}\nonumber\\
&&~~~\leq
\frac{\0}{2}\{C_{4}^{2}C_{0}^{4}\delta_{h_{l-1}}^{4-t_{i}}(\lambda_{k})
+C_{4}C_{0}^{3}\overline{C}^{-1}\delta_{h_{l-1}}^{3-t_{i}}(\lambda_{k})
+1\}\delta_{h_{l}}(\lambda_{k})\nonumber\\
&&~~~\leq
\frac{\0}{2}\{C_{4}^{2}C_{0}^{4}\delta_{H}^{1+\varepsilon}(\lambda_{k})
+C_{4}C_{0}^{3}\overline{C}^{-1}\delta_{H}^{\varepsilon}(\lambda_{k})
+1\}\delta_{h_{l}}(\lambda_{k})\nonumber\\
&&~~~\leq C_{0} \delta_{h_{l}}(\lambda_{k}),
\end{eqnarray*}
i.e., (\ref{s4.15})  are valid. And from
(\ref{s4.15}) and (\ref{3.6}) we get (\ref{s4.16}). This ends the proof.
\end{proof}

 \indent {\bf Condition 3.3.}~
There exist $\beta_{0}\in (0,1)$ and
$\beta_{i}\in [\beta_{0}, 1)$ ($i=1,2,\cdots$) such that
$\delta_{h_{i}}(\lambda_{k})=\beta_{i}\delta_{h_{i-1}}(\lambda_{k})$
 and $\delta_{h_{i}}(\lambda_{k})\to 0~(i\to\infty)$.\\

\indent {\bf Remark 3.2.}
 Note that if Condition 3.3 is valid, Condition 3.2 holds for $H$ properly small;
however, the inverse is not true. So in Theorem 3.6, (\ref{s4.15}) and (\ref{s4.16}) still hold if we replace
Condition 3.2 with  Condition 3.3.\\

\begin{theorem}
Let $(\lambda_{k}^{h_{l}}, \b{\widehat{u}}_{k}^{h_{l}})$  be an approximate
eigenpair  obtained by Scheme 3.2. Suppose that  Condition 3.2 holds for $i\le i0$
and Condition 3.3 holds for $i> i0$. Then there exist  $\b u_{k}\in M(\lambda_{k})$ and $H_0>0$ such that if $H\le H_0$ then
\begin{eqnarray}\label{s4.18}
&&\|\b{\widehat{u}}_{k}^{h_{l}}-\b u_{k}\|_{A}\leq
C_{0}\delta_{h_{l}}(\lambda_{k}),\\\label{s4.19}
&&|\lambda_{k}^{h_{l}}-\lambda_{k}|\leq
C_{4}C_{0}^{2}\delta_{h_{l}}^{2}(\lambda_{k}),~~~l> i0.
\end{eqnarray}
\end{theorem}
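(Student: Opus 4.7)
The plan is to prove Theorem 3.7 by induction on $l$, mirroring the structure of the proof of Theorem 3.6 but exploiting the fact that the shift is frozen at $\lambda_k^{h_{i0}}$ for $l>i0$. Since Steps 1--5 of Scheme 3.2 coincide with Steps 1--5 of Scheme 3.1, Theorem 3.6 applied under Condition 3.2 delivers the base case at $l=i0$: there exists $\b u_k\in M(\lambda_k)$ with
\[
\|\b{\widehat u}_k^{h_{i0}}-\b u_k\|_A\le C_0\delta_{h_{i0}}(\lambda_k),\qquad |\lambda_k^{h_{i0}}-\lambda_k|\le C_4 C_0^2 \delta_{h_{i0}}^2(\lambda_k).
\]
Choosing $H_0$ so small that Theorem 2.2 applies for $\lambda=\lambda_{k-1},\lambda_k,\lambda_{k+q}$, that $C_0\delta_{h_{i0}}(\lambda_k)\le \delta_0$ and $C_4C_0^2\delta_{h_{i0}}^2(\lambda_k)\le\delta_0'$, and that the auxiliary smallness bounds (\ref{3.7})--(\ref{3.9}) for the sequence $\delta_{h_i}(\lambda_j)$ ($j=k-1,k,k+q$) are in force, this provides the anchor for the induction as well as the hypotheses needed to invoke Theorem 3.5 at subsequent levels.

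For the inductive step at $l>i0$, I would apply Theorem 3.5 with the fixed shift $\lambda_0=\lambda_k^{h_{i0}}$. Writing the induction hypothesis at level $l-1$ as
\[
\|\b{\widehat u}_k^{h_{l-1}}-\overline{\b u}_k\|_A\le C_0\delta_{h_{l-1}}(\lambda_k),\qquad |\lambda_k^{h_{l-1}}-\lambda_k|\le C_4C_0^2\delta_{h_{l-1}}^2(\lambda_k)
\]
for some $\overline{\b u}_k\in M(\lambda_k)$ (with $\|\b{\widehat u}_k^{h_{l-1}}-\overline{\b u}_k\|_{0,\epsilon}\le \overline C^{-1}\|\b{\widehat u}_k^{h_{l-1}}-\overline{\b u}_k\|_A$), the estimate (\ref{s3.4}) gives
\[
\|\b{\widehat u}_k^{h_l}-\b u_k\|_A\le\frac{C_0}{2}\bigl(C_4C_0^2\delta_{h_{i0}}^2(\lambda_k)\bigl[C_4C_0^2\delta_{h_{l-1}}^2(\lambda_k)+\overline C^{-1}C_0\delta_{h_{l-1}}(\lambda_k)\bigr]+\delta_{h_l}(\lambda_k)\bigr).
\]
Condition 3.3 for $i>i0$ yields $\delta_{h_{l-1}}(\lambda_k)\le \beta_0^{-1}\delta_{h_l}(\lambda_k)$, so both additional terms are of the form (constant)$\cdot\delta_{h_{i0}}^2(\lambda_k)\cdot \delta_{h_l}(\lambda_k)$; by shrinking $H_0$ further so that
\[
\tfrac{1}{2}\bigl(C_4^2C_0^4\beta_0^{-2}\delta_{h_{i0}}^2(\lambda_k)\delta_{h_{l-1}}(\lambda_k)+C_4C_0^3\overline C^{-1}\beta_0^{-1}\delta_{h_{i0}}^2(\lambda_k)+1\bigr)\le 1,
\]
which is possible because $\delta_H(\lambda_k)\to 0$ as $H\to 0$, we conclude $\|\b{\widehat u}_k^{h_l}-\b u_k\|_A\le C_0\delta_{h_l}(\lambda_k)$, which is (\ref{s4.18}). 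The eigenvalue estimate (\ref{s4.19}) then follows immediately from the Rayleigh quotient inequality (\ref{3.6}) applied with $\b v_h=\b{\widehat u}_k^{h_l}/\|\b{\widehat u}_k^{h_l}\|_A$.

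The one place requiring care is verifying that the induction hypothesis is strong enough to preserve Condition 3.1 at level $l$: one needs $C_0\delta_{h_{l-1}}(\lambda_k)\le \delta_0$, $C_4C_0^2\delta_{h_{l-1}}^2(\lambda_k)\le \delta_0'$, and $\delta_{h_l}(\lambda_j)\le \delta_0$ for $j=k-1,k,k+q$; these follow since $\delta_{h_i}(\lambda_k)$ is decreasing (Condition 3.3) and since $\delta_{h_l}(\lambda_j)\to 0$ by the shape regularity and the assumed refinement pattern. The main obstacle is not any analytic difficulty but keeping track of the compounding constants so that $C_0$ can be chosen once and for all, independently of the iterative index $l$. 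That independence is precisely what is delivered by the fact that the shift $\lambda_k^{h_{i0}}$ is frozen after the $i0$-th step, so $|\lambda_0-\lambda_k|$ is bounded by the fixed (small) quantity $C_4C_0^2\delta_{h_{i0}}^2(\lambda_k)$ rather than depending on $\delta_{h_{l-1}}$, which is what makes the proof close cleanly under Condition 3.3 even though Condition 3.2 might fail for $i>i0$.
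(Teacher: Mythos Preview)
Your proposal is correct and follows essentially the same route as the paper's own proof: induction on $l$, with the base case at $l=i0$ supplied by Theorem~3.6 (the paper states it for $l=i0,i0+1$, but either anchor works), the inductive step obtained by invoking Theorem~3.5 with the fixed shift $\lambda_0=\lambda_k^{h_{i0}}$, the $\|\cdot\|_{0,\epsilon}$ term in (\ref{s3.4}) bounded by $\overline C^{-1}\|\cdot\|_A$, and the passage from $\delta_{h_{l-1}}$ to $\delta_{h_l}$ via Condition~3.3. The only cosmetic discrepancies are that your displayed smallness condition carries a harmless extra factor $\beta_0^{-1}$ in the first term and still contains $\delta_{h_{l-1}}(\lambda_k)$; the paper eliminates the $l$-dependence explicitly via $\delta_{h_{l-1}}(\lambda_k)\le\delta_{h_{i0}}(\lambda_k)\le\delta_H(\lambda_k)$, which is exactly what your phrase ``possible because $\delta_H(\lambda_k)\to0$'' is invoking.
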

\begin{proof}
The proof is completed by using induction. Noting    $\delta_H(\lambda_k)\rt 0$ as $H\rt 0$,
there exists $H_0>0$ such that if $H<H_0$  then  Theorems 2.2 holds for $\lambda=\lambda_{k-1},\lambda_{k},\lambda_{k+q}$,
 Theorem 3.6  holds  and
\begin{eqnarray*}
&&\0\delta_H(\lambda_k)\le \delta_0,~\4\0^2\delta_H^2(\lambda_k)\le \delta'_0,~
\delta_H(\lambda_j)\le \delta_0,(j=k-1,k,k+q,j\ne0),\\
&&C_{4}^{2}C_{0}^{4}\delta_{H}^{3}(\lambda_{k})\frac{1}{\beta_{0}}
+C_{4}\frac{C_{0}^3}{\overline{C}}\delta_{H}^{2}(\lambda_{k})\frac{1}{\beta_{0}}\le 1.
\end{eqnarray*}

 From Theorem 3.6 we know that  when $l=i0,i0+1$ there exists $\b u_{k}\in M(\lambda_{k})$ such that
\begin{eqnarray*}
&&\|\b{\widehat{u}}_{k}^{h_{l}}-\b u_{k}\|_{A}\leq
C_{0}\delta_{h_{l}}(\lambda_{k}),\\
&& |\lambda_{k}^{h_{l}}-\lambda_{k}|\leq
C_{4}C_{0}^{2}\delta_{h_l}^{2}(\lambda_{k}).
\end{eqnarray*}
Suppose  Theorem 3.7 holds for $l-1$, i.e., there exists
$\b{\overline{u}}_k\in M(\lambda_{k})$ such that
\begin{eqnarray*}
&&\|\b{\widehat{u}}_{k}^{h_{l-1}}-\b{\overline{u}}_k\|_{A}\leq
C_{0}\delta_{h_{l-1}}(\lambda_{k}),\\
&&|\lambda_{k}^{h_{l-1}}-\lambda|\leq
C_{4}C_{0}^{2}\delta_{h_{l-1}}^{2}(\lambda_{k}).
\end{eqnarray*}
Then the conditions of Theorem 3.5 hold, therefore, for $l$, observing that in (\ref{s3.4})
$\|\b{\widehat{u}}_{k}^{h_{l-1}}-\b{\overline{u}}_k\|_{0,\epsilon}$ can be replaced by
$\overline{C}^{-1}\|\b{\widehat{u}}_{k}^{h_{l-1}}-\b{\overline{u}}_k\|_{A}$, we deduce
\begin{eqnarray*}
&&\|\b{\widehat{u}}_{k}^{h_{l}}-\b u_{k}\|_{a}\leq
\frac{C_{0}}{2}\Big\{C_{4}^{2}C_{0}^{4}\delta_{h_{i0}}^{2}(\lambda_{k})\Big(\delta_{h_{l-1}}^{2}(\lambda_{k})
+\frac{C_{0}}{\overline{C}}\delta_{h_{l-1}}(\lambda_{k})\Big)
+\delta_{h_{l}}(\lambda_{k})\Big\}\nonumber\\
&&~~~\leq
\frac{C_{0}}{2}\{C_{4}^{2}C_{0}^{4}\delta_{h_{i0}}^{2}(\lambda_{k})\delta_{h_{l-1}}(\lambda_{k})\frac{1}{\beta_{0}}
+C_{4}\frac{C_{0}^3}{\overline{C}}\delta_{h_{i0}}^{2}(\lambda_{k})\frac{1}{\beta_{0}}
+1\}\delta_{h_{l}}(\lambda_{k}),
\end{eqnarray*}
noting that
$\delta_{h_{l-1}}(\lambda_{k})\leq
\delta_{h_{i0}}(\lambda_{k})\leq\delta_{H}(\lambda_{k})$, we get
(\ref{s4.18})
immediately.
(\ref{s4.19}) can be obtained from (\ref{s4.18}) and (\ref{3.6}).
The proof is completed.
\end{proof}

\indent{\bf Remark 3.3.}
The error estimates (\ref{s4.15}) and (\ref{s4.18}) for $\b{\wh u}^{h_l}_k$ can lead to the error estimates for $\b u^{h_l}_k$. In fact,
under the conditions of Theorem 3.6 or Theorem 3.7, we have
$$\|\b u_k\|_{A}\ge \| \b{\wh u}^{h_l}_k\|_A-\0\delta_{h_l}(\lambda_k)\ge1-\delta_0\ge1/2,$$
then  $\|\b u_k\|_a=\frac{\sqrt{\lambda_k}}{\sqrt{\wt\lambda_k}}\|\b u_k\|_{A}\ge  \frac{\sqrt{\lambda_k}}{2\sqrt{\wt\lambda_k}}$.
We further assume $\delta_0\le\frac{\sqrt{\lambda_k}}{4\sqrt{\wt\lambda_k}}$ then

\begin{eqnarray*}
 \| \b{\wh u}^{h_l}_k\|_a\ge\| \b{u}_k\|_a-\0\delta_{h_l}(\lambda_k)\ge \frac{\sqrt{\lambda_k}}{2\sqrt{\wt\lambda_k}}-\delta_0
\ge\frac{\sqrt{\lambda_k}}{4\sqrt{\wt\lambda_k}}.
\end{eqnarray*}
Therefore we derive from (\ref{s4.15}) or (\ref{s4.18})
\begin{eqnarray*}
&&\| \b{u}^{h_l}_k -\frac{\b u_k}{\|\b u_k\|_a}\|_A\leq
\frac{\|\b{\widehat{u}}^{h_l}_k-\b u_k\|_A\|\b u_k\|_a+\|\b{\widehat{u}}^{h_l}_k-\b u_k\|_a\|\b u_k\|_A}{\|\b{\widehat{u}}^{h_l}_k\|_a\|\mathbf{u}_k\|_a}\\
&&~~~\leq \frac{(\sqrt{\lambda_k}+\sqrt{\wt\lambda_k})\|\b{\widehat{u}}^{h_l}_k-\b u_k\|_A}{\|\b{\widehat{u}}^{h_l}_k\|_a\sqrt{\lambda_k}}
\leq \frac{4(\sqrt{\lambda_k\wt\lambda_k}+\wt\lambda_k)\0\delta_{h_l}(\lambda_k)}{\lambda_k},
\end{eqnarray*}
i.e., $\b{u}^{h_l}_k$ has the same convergence order as $\b{\widehat{u}}^{h_l}_k$ in the sense of $\|\cdot\|_A$.

\section{Numerical experiment}

\indent In this section, we will report  several numerical experiments for solving the Maxwell eigenvalue
problem by multigrid Scheme 3.2 using the lowest order edge element
to validate our theoretical results.  We use MATLAB 2012a to compile our program codes  and adopt the data structure of finite elements in  the package
of iFEM \cite{chen} to generate and refine the meshes.
We use the sparse solver $eigs(A,B,k,'sm')$
to solve (\ref{s2.1}) for   $k$ lowest eigenvalues.
 In our tables 4.1-4.6   we use the
notation $\lambda_{k}^{h_i}$ to denote the numerical eigenvalue approximating    $\lambda_k$
   obtained by multigrid methods at $i$th iteration   on the mesh $\pi_{h_i}$ (with number of
   degree of freedom $N^{h_i}$), and $R$ to denote the convergence rate with respect to $Dof^{-1/3}$ where $Dof$ is  the number of degrees of freedom.
 For comparative purpose, $\lambda_{k,h_j}$ denotes the numerical eigenvalue approximating $\lambda_k$
   computed by the direct solver $eigs$ on the mesh $\pi_{h_j}$.\\
\\
\indent {\em  Example 4.1.} Consider the Maxwell eigenvalue problem with $\mu=\epsilon=I$ on
the unit cube $\Omega=(-\frac{1}{2},\frac{1}{2})^3$. 
We use Scheme 3.2 with $i0=0$ to compute the  eigenvalues $\lambda_1=2\pi^2,
\lambda_4=3\pi^2,\lambda_6=5\pi^2$ (of multiplicity 3, 2 and 6 respectively).
 The numerical results are shown in Table  4.1, which indicates that numerical eigenvalues obtained by multigrid methods achieve the optimal convergence rate $R\approx2$.\\

\begin{table}
\caption{
 Example 4.1: $\Omega=(-\frac{1}{2},\frac{1}{2})^3,~\mu=\epsilon=I$  with $i0=0$.}
\begin{center} \footnotesize
\begin{tabular}{ccccccccccc}\hline
$i$&$N^{h_i}$ & $\lambda_{1}^{h_i}$ & $\lambda_{1,h_i}$&$R$&$\lambda_{4}^{h_i}$ & $\lambda_{4,h_i}$ &$R$&
$\lambda_{6}^{h_i}$&$\lambda_{6,h_i}$&$R$\\\hline
0&343&  19.5047&  19.5047& & 30.4126&  30.4126&& 45.8124&  45.8124 \\
1&3032&   19.6932&  19.6933&2.24 & 29.8349&   29.8356&1.75&  48.6276&48.6649&2.19
\\
2&26416 &  19.7284& 19.7284& 2.01 & 29.6667& 29.6670&1.89& 49.1714&49.1846&1.95\\
3&220256 &    19.7365& --&1.96  &29.6234& --&1.95 & 49.3040&--&1.97  \\
 \hline
\end{tabular}
\end{center}
\end{table}

\begin{table}
\caption{Example 4.2:
 $\Omega=((-1,1)^2\backslash (-1,0]^2)\times (0,1)$, $\mu=\epsilon=I$ with $i0=1$.}
\begin{center} \footnotesize
\begin{tabular}{ccccccccccc}\hline
$i$&$N^{h_i}$ & $\lambda_{1}^{h_i}$ & $\lambda_{1,h_i}$&$R$&$\lambda_{2}^{h_i}$ & $\lambda_{2,h_i}$&$R$ &
$\lambda_{3}^{h_i}$&$\lambda_{3,h_i}$&$R$\\\hline
0&99&      9.8975&    9.8975&&     10.8525&  10.8525&&  12.3382&  12.3382&\\
1&1028&    9.7980&   9.7981&0.63&10.9927& 10.9930&0.43&13.2728& 13.2881&2.69
\\
2&20200 &  9.6959&   9.7159&1.04&  11.2580&  11.2421&1.41& 13.3727& 13.3763&1.45\\
3&345680 &  9.6582& --&1.17  & 11.3330& --&2.08  &  13.4043 &--&4.00  \\
 \hline
\end{tabular}
\end{center}
\end{table}

\begin{table}
\caption{Example 4.2:
 $\Omega=((-1,1)^2\backslash (-1,0]^2)\times (0,1)$, $\epsilon=I$ and $\mu$ is as
 in (\ref{4.1}) with $i0=1$.}
\begin{center} \footnotesize
\begin{tabular}{cccccccccc}\hline
$i$&$N^{h_i}$ & $\lambda_{1}^{h_i}$ & $\lambda_{1,h_i}$&$\lambda_{2}^{h_i}$ & $\lambda_{2,h_i}$ &
$\lambda_{3}^{h_i}$&$\lambda_{3,h_i}$\\\hline
0&99&    2.7998&    2.7998 &    3.4243&    3.4243&     4.1146&  4.1146 \\
1&1028&    3.0331&    3.0467&  3.8965&     3.9895&  4.7137&   4.8214\\
2&20200 &  2.9313&    2.9311& 3.8243&   3.8244&   4.5235&   4.5234\\
3&345680 &   2.9138& --  &  3.8030& -- &  4.4881&--  \\
 \hline
\end{tabular}
\end{center}
\end{table}

\begin{table}
\caption{Example 4.2:
 $\Omega=((-1,1)^2\backslash (-1,0]^2)\times (0,1)$, $\mu=\epsilon=I$ with $i0=0$.}
\begin{center} \footnotesize
\begin{tabular}{cccccccccc}\hline
$k$&Coarse & Refine 1&Refine 2\\\hline
&2269&      31526&    298690\\
1&9.6932&   9.6130($R$=0.79)&   9.6364($R$=2.79)\\
2&11.1805&    11.3017($R$=1.52)&   11.3386($R$=2.52)\\
 \hline
\end{tabular}
\end{center}
\end{table}

\begin{table}
\caption{Example 4.3: $\Omega=(-\frac{1}{2},\frac{1}{2})\times (0,0.1)\times (-\frac{1}{2},\frac{1}{2})$, $\mu=I$; if $x_3>0$ then $\epsilon=2I$ otherwise $\epsilon=I$; $i0=0$.}
\begin{center} \footnotesize
\begin{tabular}{ccccccccccc}\hline
$i$&$N^{h_i}$ & $\lambda_{1}^{h_i}$ & $\lambda_{1,h_i}$&$R$&$\lambda_{2}^{h_i}$ & $\lambda_{2,h_i}$ &$R$&
$\lambda_{3}^{h_i}$&$\lambda_{3,h_i}$\\\hline
0&361&    12.52156&   12.52156 &&   29.5523&   29.5523&&    35.5836&   35.5836 \\
1&4202&   12.51452&   12.51452&0.45& 29.6070&   29.6070&1.04& 35.8558&  35.8559\\
2&39124 & 12.51586&   12.51586&0.84 & 29.6383& 29.6383&1.94& 35.9460& 35.9461\\
3&693928 &  12.51592& --&  &29.6458& --&1.55 &  35.9704&--  \\
 \hline
\end{tabular}
\end{center}
\end{table}

\begin{table}
\caption{
Example 4.4: $\Omega=(-1,1)^3\backslash [-\frac{1}{2},\frac{1}{2}]^3,~\mu=\epsilon=I$, $i0=0$.}
\begin{center} \footnotesize
\begin{tabular}{cccccccccc}\hline
$i$&$N^{h_i}$ & $\lambda_{2}^{h_i}$ & $\lambda_{2,h_i}$&$\lambda_{5}^{h_i}$ & $\lambda_{5,h_i}$ &
$\lambda_{7}^{h_i}$&$\lambda_{7,h_i}$\\\hline
0&6230 &  2.1379&  2.1379 & 6.1583& 6.1583 &  6.2501& 6.2501 \\
1&47320& 2.2582&  2.2588  & 6.2049&  6.2049  &  6.8060& 6.8128\\
2&808496 & 2.3163   & --  & 6.2315  & -- & 7.0723&--  \\
 \hline
\end{tabular}
\end{center}
\end{table}

\begin{figure}
  \centering
  \includegraphics[width=2.3in]{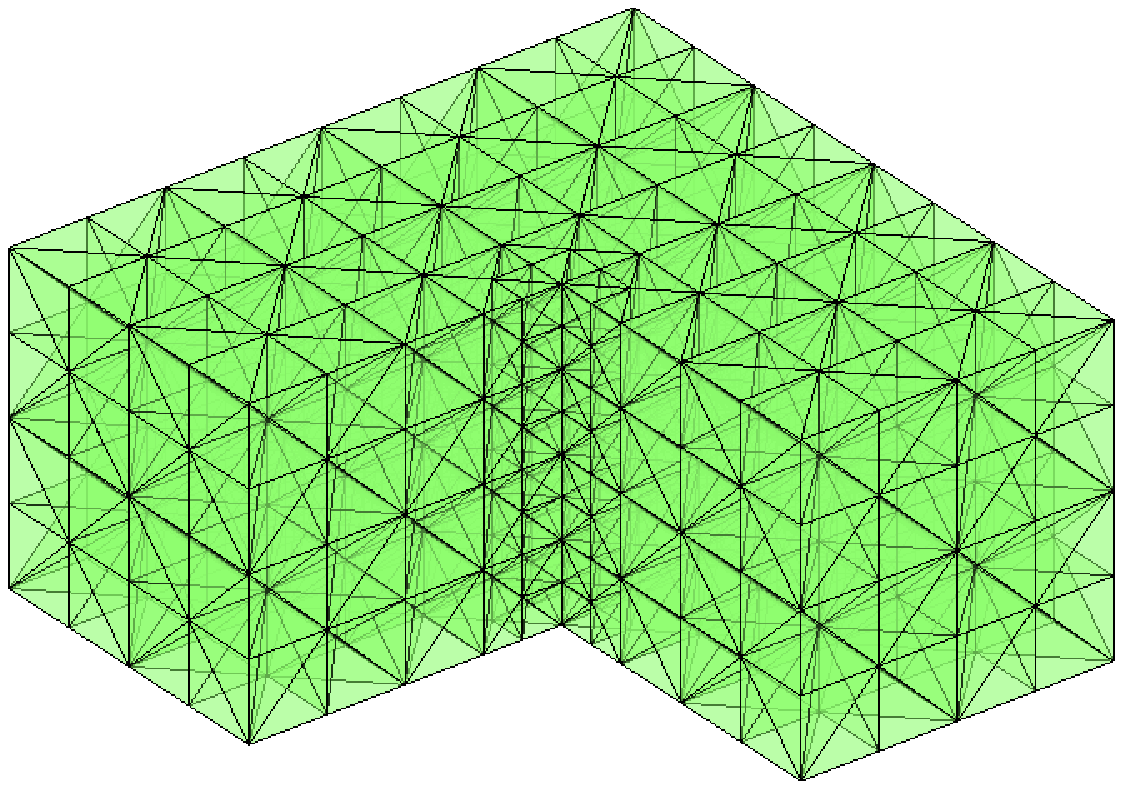}
  \includegraphics[width=2.3in]{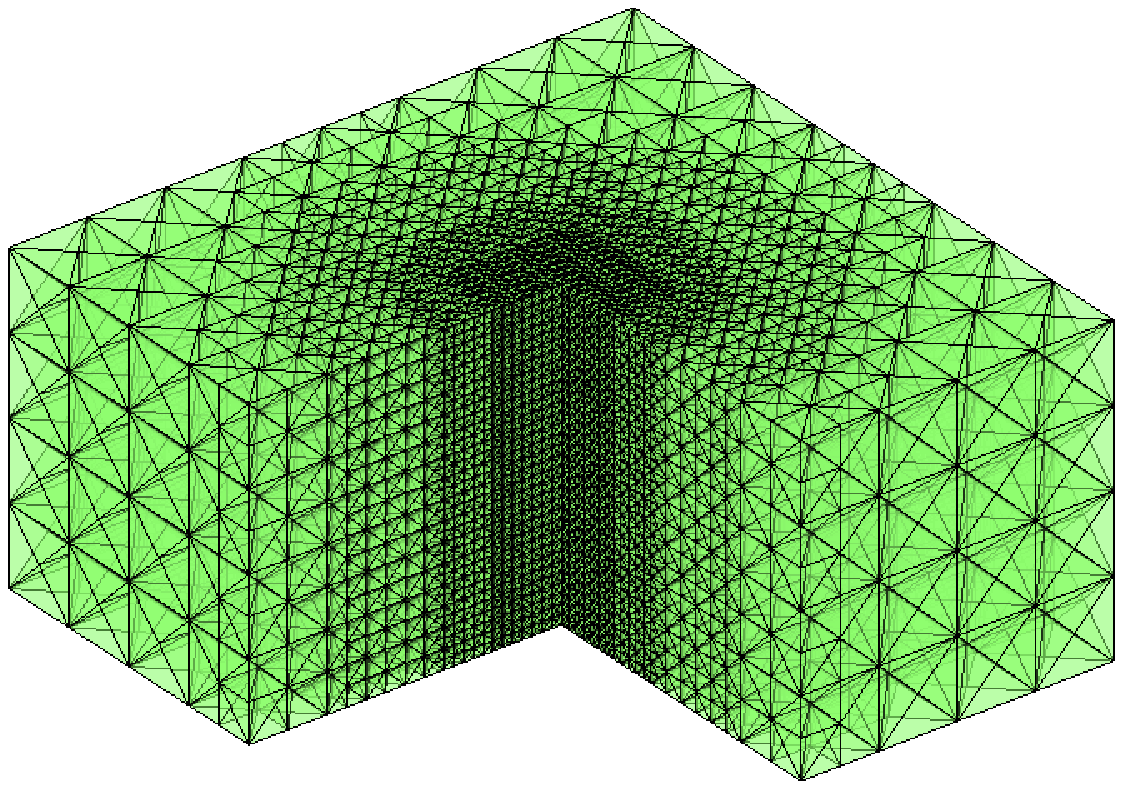}
  \caption{Coarse mesh(at left) and the second locally refined mesh (at right)}
\end{figure}

\indent{\em Example 4.2.} Consider the  eigenvalue problem with $\epsilon=I$ and $\mu=I$ or
\begin{eqnarray}\label{4.1}
\mu=\left(\begin{array}{ccc}
      2&1-2j&-j\\
      1 + 2 j&4&j\\
       j&-j&5
     \end{array}\right)
\end{eqnarray}
  on the thick L-shaped $\Omega=((-1,1)^2\backslash (-1,0]^2)\times (0,1)$.
  When $\mu=\epsilon=I$ $\lambda_1\approx9.6397$, $\lambda_2\approx11.3452$
 and $\lambda_3\approx13.4036$ (see \cite{dauge2}).  We use Scheme 3.2
  with $i0=1$ to compute the lowest three eigenvalues for both cases. The numerical results are shown
in Tables 4.2-4.3. From Table 4.2 we see that the   eigenvalue errors  obtained by Scheme 3.2
after 2nd iteration are respectively
0.019, 0.012 and 7.0e-04, which indicates the accuracy of  the  lowest two eigenvalues is affected
by the singularity of the associated eigenfunctions  in the directions
perpendicular to the reentrant edge and the convergence rate $R$ is usually less than 2. Alternately, we adopt the meshes  locally refined towards
the reentrant edge (see Figure 4.1) to perform the
iterative procedure. And the associated numerical results are listed in Table 4.4,  which implies
the errors of $\lambda^{h_2}_1$ and $\lambda^{h_2}_2$ are significantly
 decreased to 0.0033 and 0.0066 respectively  with less degrees of freedom.
\\\\
\indent{\em Example 4.3.} Consider the Maxwell eigenvalue problem with $\Omega=(-\frac{1}{2},\frac{1}{2})\times (0,0.1)\times (-\frac{1}{2},\frac{1}{2})$
 where $\mu=I$ and if $x_3>0$  then $\epsilon=2I$ otherwise $\epsilon=I$.
This is a practical problem in engineering computed in \cite{chatterjee,milan}.
We use Scheme 3.2
  with $i0=1$ to compute the three lowest  eigenvalues for both cases. The numerical results are shown
in Table 4.5.  The relatively accurate eigenvalues reported in \cite{chatterjee} are respectively  $3.538^2(12.5174)$, $5.445^2(29.6480)$
and  $5.935^2(35.2242)$. Using them as the reference values, the relative errors of numerical eigenvalues after 3rd iteration
are respectively 6.0877e-05, 3.7524e-05 and  0.0105. Obviously we get the good approximations of
the eigenvalues $\lambda_1$ and $\lambda_2$. Regarding the computation for $\lambda_3$, we refer to
 Table II in \cite{milan} whose relative error for computing $\lambda_3$ is 0.0107 using a higher order
 method. This is a computational result very close to ours.
Hence we think our method is also efficient for solving the problem.
\\\\
\indent{\em Example 4.4.} Consider the Maxwell eigenvalue problem with $\mu=\epsilon=I$ and $\Omega=(-1,1)^3\backslash [-\frac{1}{2},\frac{1}{2}]^3$. In this example, we capture a physical zero eigenvalue on a
coarse mesh with number of degrees of freedom 6230, i.e., $\lambda_{1,H}=1.9510e-12$.
We use Scheme 3.2
  with $i0=0$ to compute the   eigenvalues $\lambda_{2}$(of multiplicity 3), $\lambda_5$(of multiplicity 2) and $\lambda_7$(of multiplicity 3). The numerical results are shown
in Table 4.6. Note that the coarse mesh
seems slightly ``fine''. This is because we would like to capture all  information of the lowest
 eight eigenvalues (some of them would not be captured on a very coarse mesh). This is an example of handling the problem
in a cavity with two disconnected boundaries. For more numerical examples of the cavity with disconnected
boundaries,
we refer the readers to the work of \cite{jiang}.
\\\\
\indent{\bf Acknowledgment} The author wishes to thank Prof. Yidu Yang for many
valuable comments on this paper.


\begin{thebibliography}{s30}

\bibitem{arbenz}P. Arbenz, R. Geus, and S. Adam, Solving Maxwell eigenvalue problems for accelerating
cavities, Phys. Rev. Accelerators and Beams, 4 (2001), 022001.

\bibitem{arbenz1}P. Arbenz and R. Geus, Multilevel preconditioned iterative eigensolvers for Maxwell eigenvalue
problems, Appl. Numer. Math., 54 (2005), pp. 107-121.
%


\bibitem{ainsworth}M. Ainsworth, J. Coyle, 
Computation of Maxwell eigenvalues on curvilinear domains using hp-version N$\acute{e}$d$\acute{e}$lec elements,
Numer. Math. Adv. Appl.,  2003, pp. 219-231.






\bibitem{boffi1}D. Boffi, P. Fernandes, L. Gastaldi, and I. Perugia, Computational models of electromagnetic
resonantors: Analysis of edge element approximation, SIAM J. Numer. Anal.,
36 (1999), pp. 1264-1290.
\bibitem{boffi2}D. Boffi, Fortin operator and discrete compactness for edge elements, Numer. Math., 87
(2000), pp. 229-246.
\bibitem{boffi3}D. Boffi, Finite element approximation of eigenvalue problems, Acta. Numerica, 19
(2010), pp. 1-120.
\bibitem{buffa1} A. Buffa, P. Ciarlet, and E. Jamelot, Solving electromagnetic eigenvalue problems in
polyhedral domains with nodal finite elements, Numer. Math., 113 (2009), pp. 497-518.
\bibitem{buffa2} A. Buffa, P. Houston, and I. Perugia, Discontinuous Galerkin computation of the Maxwell
eigenvalues on simplicial meshes, J. Comput. Appl. Math., 204 (2007), pp. 317-333.
\bibitem{brezzi}F. Brezzi, M. Fortin, Mixed and Hybrid Finite Element Methods, vol. 15, Springer, New York, NY, USA, 1991.
\bibitem{brenner}
S. C. Brenner, F. Li , L. Sung,
Nonconforming Maxwell Eigensolvers, J. Sci. Comput., 40(2009),   pp. 51-85.

\bibitem{babuska}I. Babuska, J. Osborn, Eigenvalue Problems, in: P .G. Ciarlet, J. L. Lions, (Ed.), Finite Element Methods (Part 1),
Handbook of Numerical Analysis, vol.2, Elsevier Science Publishers,
North-Holand, 1991,  pp. 640-787.
\bibitem{caorsi}S. Caorsi, P. Fernandes, M. Raffetto, On the Convergence of Galerkin Finite Element Approximations of Electromagnetic Eigenproblems, SIAM J. Numer. Anal.,  38(2000), pp. 580-607.
\bibitem{chen}L. Chen, iFEM: An Integrated Finite Element Methods Package in MATLAB, Technical
report, University of California at Irvine, 2009.
\bibitem{ciarlet1} P. Ciarlet, Jr., and G. Hechme, Computing electromagnetic eigenmodes with continuous
Galerkin approximations, Comput. Methods Appl. Mech. Engrg., 198 (2008),
pp. 358-365.
\bibitem{chen2}J. Chen, Y. Xu, and J. Zou, An adaptive inverse iteration for Maxwell eigenvalue problem
based on edge elements, J. Comput. Phys., 229 (2010), pp. 2649-2658.
\bibitem{chatterjee}A. Chatterjee, J. M. Jin, and J. L Volakis, Computation of cavity resonances using edge based
finite elements, IEEE Trans. Microwave Theory Tech., 40 (1992), pp. 2106-2108.
\bibitem{chatelin} F. Chatelin, Spectral Approximations of Linear Operators, Academic Press, New York, 1983.
\bibitem{dauge2}M. Dauge, Benchmark computations for Maxwell equations for the approximation of
highly singular solutions (2004). See Monique Dauge＊s personal web page at the location
http://perso.univ-rennes1.fr/monique.dauge/core/index.html
\bibitem{girault} V. Girault and P. A. Raviart, Finite Element Methods for Navier-Stokes Equations: Theory
and Algorithms, Springer-Verlag, Berlin, 1986.
\bibitem{hiptmair1} R. Hiptmair, Finite elements in computational electromagnetism, Acta Numer., 11 (2002),
pp. 237-339.
\bibitem{hiptmair2} R. Hiptmair and J. Xu, Nodal auxiliary space preconditioning in $H(curl)$ and $H(div)$ spaces,
SIAM J. Numer. Anal., 45 (2007), pp. 2483-2509.
\bibitem{hu}X. Hu and X. Cheng, Acceleration of a two-grid method for eigenvalue problems, Math. Comp.,
80 (2011), pp. 1287-1301.





\bibitem{milan}M. M. Ili$\acute{c}$, B. M. Notaro$\check{s}$,  Higher order hierarchical curved hexahedral vector
finite elements for electromagnetic modeling, IEEE Trans. Microwave Theory Tech., 51(2003), pp. 1026-1033.


\bibitem{jiang}W. Jiang, N. Liu, Y. Yue,   Q. Liu, Mixed finite element method for resonant cavity problem with complex geometric
topology and anisotropic media, IEEE Trans.   Magnetics, 52(2016), 7400108.
\bibitem{kikuchi}F. Kikuchi, Weak formulations for finite element analysis of an electromagnetic eigenvalue
problem, Scientific Papers of the College of Arts and Sciences, University of Tokyo, 38
(1988), pp. 43-67.
\bibitem{kikuchi1}F. Kikuchi, On a discrete compactness property for the Nedelec finite elements, J.fac.sci.univ.tokyo Sect. 1A Math, 36(1989), pp. 479-490.
\bibitem{kirsch}A. Kirsch, P. Monk,    A finite element method for approximating electromagnetic
scattering from a conducting object, Numer. Math.,  92(2002), pp. 501-534.

\bibitem{monk}P. Monk, Finite Element Methods for Maxwell＊s Equations, Oxford University Press, Oxford,
UK, 2003.
\bibitem{monk2}P. Monk, L. Demkowicz, Discrete compactness and the approximation of Maxwell's equations in $\mathbb{R}^3$, Math. Comp., 70(2000), pp. 507-523.





\bibitem{nedelec}  J. C. N$\acute{e}$d$\acute{e}$lec, Mixed finite elements in $R^3$. Numer. Math., 35(1980), pp. 315-341.
\bibitem{reddy} C. J.  Reddy, M. D. Deshpande, C. R. Cockrell, F. B. Beck, Finite Element Method for Eigenvalue Problems in Electromagnetics, Nasa Sti/recon Technical Report N, 95(1995).

\bibitem{russo}A. D. Russo, A. Alonso, Finite element approximation of Maxwell eigenproblems on curved Lipschitz polyhedral domains,
Appl. Numer. Math.,  59(2009), pp. 1796-1822
\bibitem{xu2} J. Xu, A. Zhou, A two-grid discretization scheme for eigenvalue problems,
Math. Comp. 70(1999), pp. 17-25.
\bibitem{yang2} Y. Yang, H. Bi, J. Han, Y. Yu, The Shifted-Inverse Iteration Based on the Multigrid Discretizations
for Eigenvalue Problems, SIAM J. Sci. Comput., (37)(2015), pp. A2583-A2606.

















%

















%









\bibitem{yang3}Y. Yang, H. Bi, A two-grid discretization scheme based on shifted-inverse power method,
SIAM J. Numer. Anal.  49 (2011), pp. 1602-1624.

\bibitem{yang4}Y. Yang, Y. Zhang, H Bi, Multigrid Discretization and Iterative Algorithm for Mixed Variational Formulation of the Eigenvalue Problem of Electric Field, Abstr. Appl. Anal., 2012(2012), pp. 1-25, doi:10.1155/2012/190768.

\bibitem{zhou} J. Zhou, X. Hu, L. Zhong, S. Shu, L. Chen. Two-Grid Methods for Maxwell Eigenvalue Problem,
SIAM J.   Numer. Anal., 52(4)(2014), pp. 2027-2047.


\end{thebibliography}
\end{document}